\title{On Cyclic Kautz Digraphs
\thanks{K. B\"{o}hmov\'a is a recipient of the Google Europe Fellowship in Optimization Algorithms, and this research is
supported in part by this Google Fellowship. Research of C. Dalf\'{o} supported by projects MINECO MTM2011-28800-C02-01, and 2014SGR1147 from the Catalan Government. Research of C. Huemer supported by projects MINECO MTM2012-30951 and
Gen. Cat. DGR 2014SGR46. }}
\author{K. B\"{o}hmov\'a$^a$, C. Dalf\'{o}$^b$, C. Huemer$^b$
\\ \\
{\small $^a$ Dept. of Computer Science, ETH Z\"{u}rich, Switzerland} \\
{\small {\tt katerina.boehmova@inf.ethz.ch}}\\
{\small $^b$ Dept. de Matem\`atica, Universitat Polit\`ecnica de Catalunya, Barcelona, Catalonia}\\
{\small {\tt\{cristina.dalfo,clemens.huemer\}@upc.edu}}}
\date{}
\newtheorem{theorem}{Theorem}[section]
\newtheorem{proposition}[theorem]{Proposition}
\newtheorem{lemma}[theorem]{Lemma}
\newtheorem{remark}[theorem]{Remark}
\begin{document}

\maketitle

\begin{abstract}
A prominent problem in Graph Theory is to find extremal graphs or digraphs with restrictions in their diameter, degree and number of vertices. Here we obtain a new family of digraphs with minimal diameter, that is, given the number of vertices and out-degree there is no other digraph with a smaller diameter. This new family is called modified cyclic digraphs $MCK(d,\ell)$ and it is derived from the Kautz digraphs $K(d,\ell)$. 

It is well-known that the Kautz digraphs $K(d,\ell)$ have the smallest diameter among all digraphs with their number of
vertices and degree. We define the cyclic Kautz digraphs
$CK(d,\ell)$, whose vertices are labeled by all possible sequences $a_1\ldots a_\ell$ of length $\ell$,
such that each character $a_i$ is chosen from an alphabet containing $d+1$ distinct symbols,
where the consecutive characters in the sequence are different (as in Kautz digraphs), and now also requiring that $a_1\neq a_\ell$.
The cyclic Kautz digraphs $CK(d,\ell)$ have arcs between vertices $a_1
a_2\ldots a_\ell$ and $a_2 \ldots a_\ell a_{\ell+1}$,
with $a_1\neq a_\ell$ and $a_2\neq a_{\ell+1}$.
Unlike in Kautz digraphs $K(d,\ell)$, any label of a vertex of
$CK(d,\ell)$ can be cyclically shifted to form again a label of
a vertex of $CK(d,\ell)$.

We give the main parameters of $CK(d,\ell)$: number of vertices, number of arcs, and diameter. Moreover, we construct the modified cyclic Kautz digraphs $MCK(d,\ell)$  to obtain the same diameter as in the Kautz digraphs, and we show that $MCK(d,\ell)$ are $d$-out-regular. Finally, we compute the number of vertices of the iterated line digraphs of $CK(d,\ell)$.
\end{abstract}


\section{Introduction}

Searching for graphs or digraphs with maximum number of vertices given maximum degree $\Delta$ and diameter $D$, or with minimum diameter given maximum degree $\Delta$ and number of vertices $N$ are two very prominent problems in Graph Theory. These problems are called the $(\Delta,D)$ problem and the $(\Delta,N)$ problem, respectively. See the comprehensive survey by Miller and \v{S}ir\'{a}\v{n}~\cite{MiSi13} for more information. 
In this paper, we obtain a new family of digraphs with minimal diameter, in the sense that given the number of vertices and out-degree there is no other digraph with a smaller diameter. This new family is called modified cyclic Kautz digraphs $MCK(d,\ell)$ and it is derived from the Kautz digraphs $K(d,\ell)$.

It is well-known that the Kautz digraphs $K(d,\ell)$, where $d$ is the degree, have vertices labeled by all possible sequences $a_1\ldots a_\ell$ of length $\ell$
with different consecutive symbols, $a_i\neq a_{i+1}$ for $i=1,\ldots,\ell-1$, from an alphabet $\Sigma$ of $d+1$ distinct symbols.
The Kautz digraphs $K(d,\ell)$ have arcs between vertices $a_1
a_2\ldots a_\ell$ and $a_2 \ldots a_\ell a_{\ell+1}$. See Figure~\ref{fig:Kautz-normals}.
Notice that between $a_1 a_2\ldots a_\ell$ and $a_2 \ldots a_\ell a_1$ there is not always an arc,
since $a_1$ and $a_\ell$ may be the same symbol, in which case $a_2 \ldots a_\ell a_1$ is not a vertex of $K(d,\ell)$.

In this paper, we define the cyclic Kautz digraphs
$CK(d,\ell)$ (see Figure~\ref{fig:donut}), where the labels of their vertices are defined
as the ones of the Kautz digraphs, with the additional requirement
that the first and the last symbol must also be different $(a_1\neq
a_\ell)$.
The cyclic Kautz digraphs $CK(d,\ell)$ have arcs between
vertices $a_1 a_2\ldots a_\ell$ and $a_2 \ldots a_\ell a_{\ell+1}$, with $a_i\neq a_{i+1}$, $a_1\neq a_\ell$ and $a_2\neq a_{\ell+1}$.
By this definition, we observe that the cyclic Kautz digraphs
$CK(d,\ell)$ are subdigraphs of the Kautz digraphs $K(d,\ell)$.
Unlike in Kautz digraphs $K(d,\ell)$, any label of a vertex of
$CK(d,\ell)$ can be cyclically shifted to form again a label of
a vertex of $CK(d,\ell)$.
We study some of the properties of cyclic Kautz digraphs.
Note that, in contrast to the Kautz digraphs, the cyclic Kautz
digraphs $CK(d,\ell)$ are not $d$-regular (neither $d$-out-regular). Therefore, for $CK(d,\ell)$ the meaning of $d$ is the size of the
alphabet minus one, and for $\ell>3$ $d$ also corresponds to the maximum out-degree of $CK(d,\ell)$. 

\begin{figure}[t]
\vskip-.75cm
    \begin{center}
  \includegraphics[width=10cm]{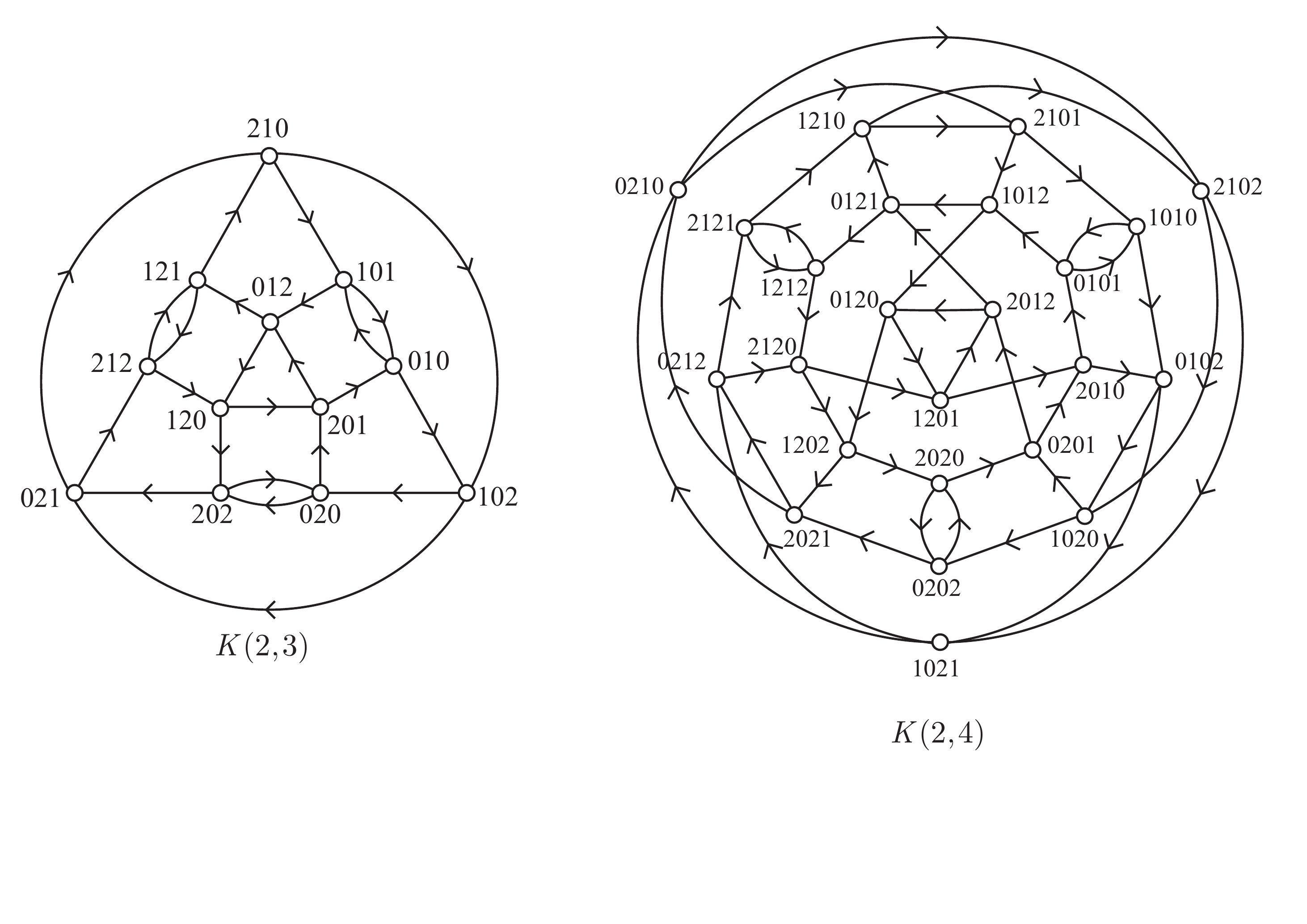}
  \end{center}
  \vskip-1.75cm
  \caption{The Kautz digraphs $K(2,3)$ and $K(2,4)$.}\label{fig:Kautz-normals}
\end{figure}

The cyclic Kautz digraphs $CK(d,\ell)$ are related to cyclic codes. A linear code $C$ of length $\ell$ is called cyclic if, for every codeword
$c=(c_1,\ldots,c_\ell)$, the codeword $(c_\ell,c_1,\ldots,c_{\ell-1})$ is also in $C$. 
This cyclic permutation allows to identify codewords with polynomials.
For more information about cyclic codes and coding theory, see Van Lint~\cite{vL92} (Chapter 6). With respect to other properties of the cyclic Kautz digraphs $CK(d,\ell)$, their number of vertices follows sequences that have several interpretations. For example, for $d=2$ (that is, 3 different symbols), the number of vertices follows the sequence $6,6,18,30,66,\ldots$ According to the On-Line Encyclopedia of Integer Sequences~\cite{Sl}, this is the sequence A092297, and it corresponds to the number of ways of 3-coloring a ring with $n$ zones joined like a pearl necklace. Moreover, dividing by 6 our sequence, we obtain $1,1,3,5,11,\ldots$, known as the Jacobsthal sequence (see~\cite{Sl}, A001045), with a lot of meanings, such as the number of ways to tile a 3x$(n-1)$ rectangle with 1x1 and 2x2 square tiles. For $d=3$ (4 different symbols) and $\ell=2,3,\ldots$, we get the sequence $12,24,84,240,732,\ldots$ An interpretation of this sequence corresponds to the number of closed walks of length $n$ in the complete graph $K_4$ (see~\cite{Sl}, A226493 and A218034). Dividing this sequence by 12, we obtain more interpretations (\cite{Sl}, A015518). 
%

Originally, the Kautz digraphs were introduced by Kautz~\cite{Ka68,Ka69} in 1968. They have many applications, for example, they are useful as network topologies for connecting processors. $K(d,\ell)$ have order $d^{\ell}$+$d^{{\ell} -1}$, where $d$ is equal to the cardinality of the alphabet minus one. They are $d$-regular, have small diameter $(D=\ell)$, and high connectivity. In fact, the Kautz digraphs have the smallest diameter among all digraphs with their number of vertices and degree. The Kautz digraphs $K(d,\ell)$ are related to the De Bruijn digraphs $B(d,\ell)$, which are defined in the same way as the Kautz digraphs, but without the restriction that adjacent symbols in the label of a vertex have to be distinct. Thus, the Kautz digraphs are induced subdigraphs of the De Bruijn digraphs, which were introduced by De Bruijn~\cite{dB1946} in 1946. The De Bruijn digraphs have order $d^{\ell}$, where the degree $d$ is equal to the cardinality of the alphabet, and they are also $d$-regular, have small diameter $(D=\ell)$, and high connectivity.
Another interesting property of the Kautz and De Bruijn digraphs is that they can be defined as iterated line digraphs of complete symmetric digraphs and complete symmetric digraphs with a loop on each vertex, respectively (see Fiol, Yebra and Alegre~\cite{FiYeAl84}). Note that the Kautz and De Bruijn digraphs are often referred to, as an abuse of language, as the Kautz and De Bruijn `graphs', which should not be confused with the underlying (undirected) Kautz and De Bruijn graphs.


It is known that the diameter of a line digraph $L(G)$ of a digraph $G$ is $D(L(G))= D(G)+1$, even if $G$ is a non-regular digraph with the exception of directed cycles (see Fiol, Yebra and Alegre~\cite{FiYeAl84}). Then, with the line digraph technique, we obtain digraphs with minimal diameter (and maximum connectivity) from a certain iteration. For this reason, we calculate the number of vertices of digraphs obtained with this technique for the cyclic Kautz digraphs. Computing the number of vertices of a $t$-iterated line digraph is easy for regular digraphs, but in the
case of non-regular digraphs, this is an interesting combinatorial
problem, which can be quite difficult to solve.
Fiol and Llad\'{o} defined in~\cite{FiLl92} the partial line digraph $PL(G)$ of a digraph $G$, where some (but not necessarily all) of the vertices in $G$ become arcs in $PL(G)$. For a comparison between the partial digraph technique and other construction techniques to obtain digraphs with minimum diameter see Miller, Slamin, Ryan and Baskoro~\cite{MiSlRyBa13}. Since these techniques are related to the degree/diameter problem, we also refer to the comprehensive survey of this problem by Miller and \v{S}ir\'{a}\v{n}~\cite{MiSi13}. We will use the partial line digraph technique to obtain the modified cyclic Kautz digraphs $MCK(d,\ell)$.
%

In this paper we make the following contributions. We give, in
Section~\ref{sec:parametres}, the
main parameters of the (newly defined) cyclic Kautz digraphs
$CK(d,\ell)$, that is, the number of vertices, number of arcs, and
diameter. Then, in Section~\ref{sec:modificats}, we construct the
modified cyclic Kautz digraphs $MCK(d,\ell)$ in order to obtain
digraphs with the same diameter as the Kautz digraphs, and we show
that $MCK(d,\ell)$ are $d$-out-regular. Finally, in
Section~\ref{sec:digrafs-linia}, we obtain the number of vertices of
the $t$-iterated line digraph of $CK(d,\ell)$ for $1\leq t\leq\ell-2$,
and for the case of $CK(d,4)$ for all values of $t$. For the
particular case of $CK(2,4)$, these numbers of vertices follow
a Fibonacci sequence.
Some of the proofs are in the Appendix, in order to make this paper more readable,
although all the main ideas are given in Sections~\ref{sec:parametres}--\ref{sec:digrafs-linia}.

We use the habitual notation for digraphs, that is, a \emph{digraph} $G=(V,E)$ consists of a (finite) set $V=V(G)$ of vertices and a set $E=E(G)$ of arcs (directed edges) between vertices of
$G$. As the initial and final vertices of an arc are not necessarily different, the digraphs may have \emph{loops} (arcs from a vertex to itself), but not \emph{multiple arcs}, that is, there is at most one arc from each vertex to any other. If $a=(u,v)$ is an arc between vertices $u$ and $v$, then vertex $u$ (and arc $a$) is adjacent to vertex $v$, and vertex $v$ (and arc $a$) is adjacent from $v$. Let $\Gamma_G^+(v)$ and $\Gamma_G^-(v)$ denote the set of vertices adjacent from and to vertex $v$, respectively. Their cardinalities are the \emph{out-degree} $\delta_G^+(v)=|\Gamma_G^+(v)|$ of vertex $v$, and the \emph{in-degree} $\delta_G^-(v)=|\Gamma_G^-(v)|$ of vertex $v$. Digraph $G$ is called $d$\emph{-out-regular} if $\delta_G^+(v)=d$ for all $v\in V$, $d$\emph{-in-regular} if $\delta_G^-(v)=d$ for all $v\in V$, and $d$\emph{-regular} if $\delta_G^+(v)=\delta_G^-(v)=d$ for all $v\in V$.

\begin{figure}[t]
\vskip-.75cm
    \begin{center}
  \includegraphics[width=10cm]{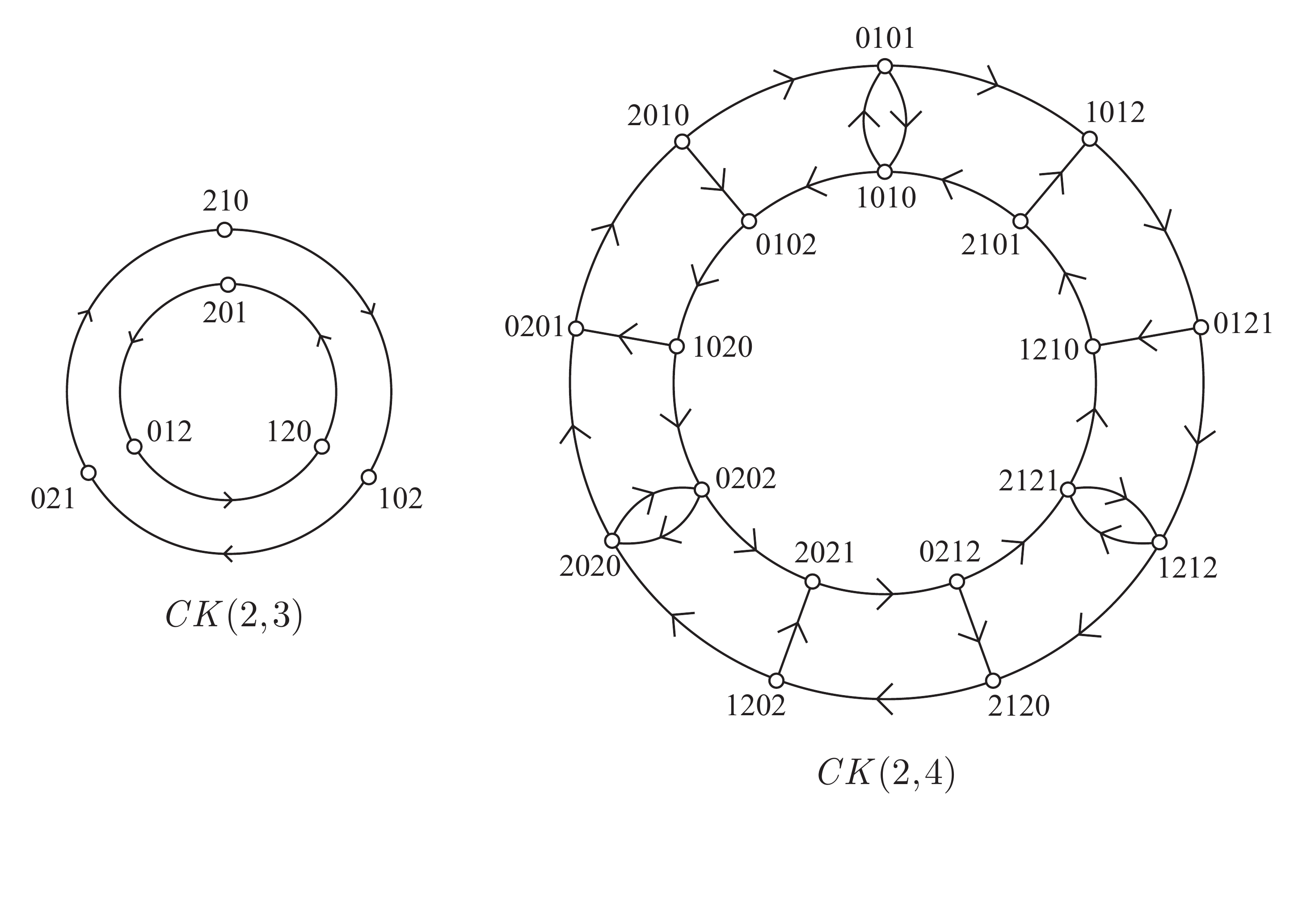}
  \end{center}
  \vskip-1.5cm
  \caption{The cyclic Kautz digraphs $CK(2,3)$ and $CK(2,4)$.}\label{fig:donut}
\end{figure}

\section{Parameters of the cyclic Kautz digraphs}
\label{sec:parametres}

\subsection{Numbers of vertices and arcs}

\begin{proposition}
\label{prop:num-vertexs}
The number of vertices of the cyclic Kautz digraph $CK(d,\ell)$ is $(-1)^\ell d+ d^\ell$.
\end{proposition}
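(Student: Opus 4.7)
The plan is to recognize the vertex set of $CK(d,\ell)$ as the set of proper colorings of a cycle. Each vertex is a word $a_1\ldots a_\ell$ from an alphabet of $d+1$ symbols with $a_i \neq a_{i+1}$ for $i=1,\ldots,\ell-1$ and additionally $a_\ell \neq a_1$; this is precisely a proper vertex coloring of the cycle graph $C_\ell$ with $d+1$ colors, where the positions $1,\ldots,\ell$ are the vertices of $C_\ell$. So the number of vertices of $CK(d,\ell)$ equals $P(C_\ell, d+1)$, where $P(C_\ell,\cdot)$ is the chromatic polynomial of $C_\ell$.

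I would then invoke (or derive) the standard formula $P(C_\ell,k) = (k-1)^\ell + (-1)^\ell (k-1)$. Setting $k=d+1$ immediately yields $d^\ell + (-1)^\ell d$, which is the claimed count.

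If a self-contained derivation is wanted, I would use the transfer matrix method. Let $A$ be the $(d+1)\times(d+1)$ adjacency matrix of the complete graph $K_{d+1}$, with zeros on the diagonal and ones elsewhere (so $A_{xy}=1$ iff $x \neq y$). Then the number of closed walks of length $\ell$ in $K_{d+1}$, equivalently the number of sequences $a_1\ldots a_\ell$ with $a_i \neq a_{i+1}$ and $a_\ell \neq a_1$, is exactly $\mathrm{tr}(A^\ell)$. Since $A = J - I$ with $J$ the all-ones matrix, the eigenvalues of $A$ are $d$ (simple, eigenvector $\mathbf{1}$) and $-1$ (with multiplicity $d$, eigenvectors orthogonal to $\mathbf{1}$). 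Hence
\[
\mathrm{tr}(A^\ell) = d^\ell + d\cdot(-1)^\ell,
\]
which is what we wanted.

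There is no real obstacle here; the only subtle point is to make the identification between labels of $CK(d,\ell)$ and proper colorings of $C_\ell$ fully rigorous (the constraints $a_i\neq a_{i+1}$ together with $a_1\neq a_\ell$ correspond exactly to the edges of $C_\ell$), and to take a little care with the small case $\ell=2$, where the cycle $C_2$ degenerates but the formula still gives the correct value $d^2+d = (d+1)d$, matching the number of ordered pairs of distinct symbols.
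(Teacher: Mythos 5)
Your proposal is correct and follows essentially the same route as the paper: the paper also identifies the vertices of $CK(d,\ell)$ with closed walks of length $\ell$ in the complete graph $K_{d+1}$ and evaluates $\mathrm{tr}(A^\ell)=d^\ell+(-1)^\ell d$ from the spectrum $\{d^{(1)},(-1)^{(d)}\}$. Your chromatic-polynomial framing is just a repackaging of that same count, and your transfer-matrix derivation coincides with the paper's argument.
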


\begin{proof}
There is a direct bijection between the vertices of $CK(d,\ell)$ and the closed walks of length~$\ell$ in the complete symmetric digraph with $d+1$ vertices (which is equivalent to the complete graph): A vertex of
$CK(d,\ell)$ is a sequence $a_1 a_2 \ldots a_\ell$ of different consecutive symbols, and with $a_1 \neq a_\ell$, from an alphabet of $d+1$ distinct symbols. Such a sequence corresponds to a closed walk $a_1 a_2 \ldots a_\ell a_1$ in the complete graph with $d+1$ vertices.
The claim now follows from the fact that the number of closed walks of length $\ell$ in a graph equals
the trace of $A^\ell$, where $A$ is the adjacency matrix of the graph (see, for example, Brouwer and Haemers~\cite{haemers}).
The spectrum of a complete graph with $d+1$ vertices has eigenvalue $-1$ with multiplicity $d$ and eigenvalue $d$ with multiplicity $1$.
Therefore, for this graph, the trace of $A^\ell$ is $(-1)^\ell d+ d^\ell.$
\end{proof}

\begin{proposition}
\label{prop:num-arcs}
The number of arcs of the cyclic Kautz digraph $CK(d,\ell)$ for $\ell\geq3$ is
$$
(d+1)d^{\ell}-(2d-1)((-1)^{\ell-1}d+d^{\ell-1}).
$$
\end{proposition}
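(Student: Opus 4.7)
My plan is to count arcs by summing out-degrees, after classifying vertices by whether their second symbol equals their last symbol.

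First I would determine the out-degree of an arbitrary vertex $v=a_1 a_2 \ldots a_\ell$. Its out-neighbors are sequences $a_2 \ldots a_\ell a_{\ell+1}$ where $a_{\ell+1}$ must differ from $a_\ell$ (Kautz condition on the new last two symbols) and from $a_2$ (new cyclic condition $a_2 \neq a_{\ell+1}$). Since the alphabet has $d+1$ symbols, this gives out-degree $d-1$ when $a_2 \neq a_\ell$, and out-degree $d$ when $a_2 = a_\ell$. Let $N_{=}$ denote the number of vertices of $CK(d,\ell)$ with $a_2 = a_\ell$; writing $V_\ell=(-1)^\ell d + d^\ell$ for the vertex count from Proposition~\ref{prop:num-vertexs}, the total number of arcs is
\[
(d-1)(V_\ell - N_=) + d\cdot N_= = (d-1)V_\ell + N_=.
\]

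The main task is to compute $N_=$. Here I would mimic the bijection used in Proposition~\ref{prop:num-vertexs}: a vertex with $a_\ell = a_2$ is obtained by choosing $a_2$ freely ($d+1$ options), then choosing $a_1 \neq a_2$ ($d$ options, automatically ensuring $a_1 \neq a_\ell$), then choosing the tail $a_2 a_3 \ldots a_\ell$ as a closed walk of length $\ell-2$ from $a_2$ to itself in the complete graph $K_{d+1}$. By vertex-transitivity of $K_{d+1}$, the number of such closed walks at a fixed vertex equals $\tfrac{1}{d+1}\operatorname{tr}(A^{\ell-2}) = \tfrac{1}{d+1}\bigl(d^{\ell-2} + d(-1)^{\ell-2}\bigr)$, using the spectrum recalled in the previous proof. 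Multiplying gives
\[
N_= \;=\; (d+1)\cdot d \cdot \frac{d^{\ell-2}+d(-1)^{\ell}}{d+1} \;=\; d^{\ell-1} + d^2(-1)^\ell.
\]

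Finally I would substitute into $(d-1)V_\ell + N_=$, expand, and rearrange to match the target expression $(d+1)d^\ell - (2d-1)\bigl((-1)^{\ell-1}d + d^{\ell-1}\bigr)$. This last step is purely algebraic and I would just verify the coefficients of $d^{\ell+1}$, $d^\ell$, $d^{\ell-1}$, $(-1)^\ell d^2$, and $(-1)^\ell d$ on both sides agree. The hypothesis $\ell \geq 3$ is needed so that the walk of length $\ell-2$ has nonnegative length and the out-degree analysis makes sense (for $\ell=3$ one must note that a vertex always has $a_2 \neq a_\ell$, since $a_2\neq a_3=a_\ell$, so $N_==0$, which is consistent with the formula).

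The main obstacle is really just carrying out the walk count carefully; everything else reduces to reusing the trace-of-$A^k$ argument from Proposition~\ref{prop:num-vertexs} and routine algebra.
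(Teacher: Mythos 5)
Your proof is correct, but it takes a genuinely different route from the paper's. The paper does not prove Proposition~\ref{prop:num-arcs} directly: it observes that the number of arcs of $CK(d,\ell)$ equals the number of vertices of the line digraph of $CK(d,\ell)$, and obtains the formula as the case $t=1$ of Theorem~\ref{theo:num-v-line-dig}, whose proof counts the sequences $a_1\ldots a_{\ell+t}$ via a partition into classes $\mathcal{B}_\ell$, $\mathcal{C}_\ell$, $\mathcal{D}_\ell$ and the solution of two systems of linear recurrences (Lemmas~\ref{lemma:main} and~\ref{lem:recursion2}). You instead sum out-degrees directly: a vertex $a_1\ldots a_\ell$ has out-degree $d$ or $d-1$ according to whether $a_2=a_\ell$ or not, and the number $N_{=}$ of vertices with $a_2=a_\ell$ is computed by the same trace-of-$A^k$ argument as in Proposition~\ref{prop:num-vertexs}, now applied to closed walks of length $\ell-2$ in $K_{d+1}$; indeed $N_{=}=d\operatorname{tr}(A^{\ell-2})=d^{\ell-1}+(-1)^{\ell}d^2$, and $(d-1)\bigl((-1)^\ell d+d^\ell\bigr)+N_{=}=d^{\ell+1}-d^{\ell}+d^{\ell-1}+(-1)^{\ell}d(2d-1)$, which matches the claimed expression. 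Your argument is shorter and more elementary, makes the out-degree structure of $CK(d,\ell)$ explicit, and correctly handles the boundary case $\ell=3$ (where $N_{=}=0$); what it does not buy is the general formula for all iterations $1\le t\le\ell-2$, which is the reason the paper defers this proposition to the machinery of Section~\ref{sec:digrafs-linia}.
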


We defer the proof to Section~\ref{sec:digrafs-linia}; the number of arcs of $CK(d,\ell)$ equals the number of vertices of the line digraph of $CK(d,\ell)$. Therefore, Proposition~\ref{prop:num-arcs} is the case $t=1$ of Theorem~\ref{theo:num-v-line-dig}. Note that for $\ell=2$, $CK(d,2)$ is the Kautz digraph $K(d,2)$ and its number of arcs is $(d+1)d^2$.


\subsection{Diameter}

\newcommand{\comment}[1]{\em \footnotesize #1}

In this section we give a complete characterization of the diameter of
the cyclic Kautz digraphs $CK(d,\ell)$, depending on the values of $d$
and $\ell$.
%
%
In the following claims and proofs, let us fix $\Sigma = \{0,1,{\dots},d\}$
to be the alphabet of $CK(d,\ell)$.

First, we give the diameter for small values of $d$ and $\ell$.
Some proofs are deferred to the Appendix.

\begin{lemma}[$\ell=1$) \& ($d = 1,\ell \geq 2$]
\label{lem:diamd1}
The diameter of $CK(d,1)$ is $1$.
The cyclic Kautz digraphs $CK(1,\ell \geq 2)$ exist only if $\ell$ is
even.
For even $\ell$, the diameter of $CK(1,\ell \geq 2)$ is $1$.
\end{lemma}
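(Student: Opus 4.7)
The plan is to dispose of the three assertions in turn, each by a direct structural description of the digraph in question.

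First, for $CK(d,1)$: when $\ell = 1$ the vertex labels are single symbols, and the defining conditions $a_i \neq a_{i+1}$ and $a_1 \neq a_\ell$ are vacuous (there are no consecutive positions and $a_1 = a_\ell$ is the same symbol), so $CK(d,1)$ has $d+1$ vertices, one per symbol of $\Sigma$. An arc from $a_1$ to $a_2$ requires only $a_1 \neq a_2$, so $CK(d,1)$ is the complete symmetric digraph on $d+1$ vertices, which trivially has diameter $1$.

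Next, for $d = 1$ and $\ell \geq 2$: the alphabet $\Sigma = \{0,1\}$ has only two symbols, and the condition $a_i \neq a_{i+1}$ forces every valid label to strictly alternate. Hence the only candidate labels are $0101\cdots$ and $1010\cdots$. In each case $a_\ell = a_1$ precisely when $\ell$ is odd, so the extra condition $a_1 \neq a_\ell$ excludes all labels when $\ell$ is odd and retains exactly the two alternating labels when $\ell$ is even. This gives part two: $CK(1,\ell)$ is nonempty only for even $\ell$.

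Finally, for even $\ell \geq 2$, write $v = 0101\cdots 01$ and $w = 1010\cdots 10$, so $V(CK(1,\ell)) = \{v,w\}$. A cyclic shift of $v$ has the form $101\cdots 01 a_{\ell+1}$; the constraints $a_\ell \neq a_{\ell+1}$ and $a_2 \neq a_{\ell+1}$ force $a_{\ell+1} = 0$, giving the arc $v \to w$. The symmetric computation gives the arc $w \to v$. Hence $CK(1,\ell)$ is the complete symmetric digraph on two vertices, which has diameter $1$.

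No step is technically difficult; the only thing to watch is the boundary interpretation of the defining constraints when $\ell = 1$ (where $a_1 = a_\ell$ trivially) and the careful verification that the shifted label in the $d=1$ case really lies in the vertex set. Both are straightforward once one writes down the explicit labels.
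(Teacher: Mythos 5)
Your proposal is correct and follows essentially the same route as the paper's own proof: both identify $CK(d,1)$ as the complete symmetric digraph on the $d+1$ single-symbol vertices, and for $d=1$ both observe that the alternation constraint leaves exactly the two alternating labels (nonexistent for odd $\ell$ because then $a_1=a_\ell$), with arcs in both directions between them. Your version merely spells out the boundary interpretation of the constraints at $\ell=1$ and the explicit shift computation, which the paper leaves implicit.
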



\begin{lemma}[$d \geq 2,\ell = 2$]
\label{lem:diamd2}
The diameter of $CK(d \geq 2,2)$ is $2$.
\end{lemma}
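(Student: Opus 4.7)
The plan is to prove $D(CK(d,2))\leq 2$ and $D(CK(d,2))\geq 2$ separately. A preliminary observation simplifies things: for $\ell=2$ the Kautz condition $a_1\neq a_2$ and the cyclic condition $a_1\neq a_\ell$ coincide, so $CK(d,2)$ has exactly the same vertex set as $K(d,2)$, namely all ordered pairs $ab$ of distinct symbols from $\Sigma$, and an arc from $ab$ to $ce$ exists precisely when $c=b$ (the requirement $c\neq e$ being automatic since $ce$ is a vertex).

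For the upper bound I would take arbitrary vertices $ab$ and $ce$ and split on whether $c=b$. In the first case $ab\to ce$ is already an arc. In the second case $b\neq c$, so $bc$ is a valid vertex, and $ab\to bc\to ce$ is a walk of length $2$ (both arcs are legal because $b\neq c$ and $c\neq e$). Hence $d(ab,ce)\leq 2$ for every pair of vertices.

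For the lower bound, which is where the hypothesis $d\geq 2$ enters, I would exhibit a specific pair at distance at least $2$. Since $|\Sigma|\geq 3$, the pairs $01$ and $20$ are both vertices, and there is no arc $01\to 20$ because that would force the middle characters to agree ($1=2$). Therefore $d(01,20)\geq 2$, completing the argument.

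I do not expect any genuine obstacle here; the only thing worth being careful about is the coincidence of the two defining conditions at $\ell=2$, which guarantees that the intermediate vertex $bc$ used in the upper bound and the pair exhibited for the lower bound really do lie in $CK(d,2)$.
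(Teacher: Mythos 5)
Your proof is correct and follows essentially the same route as the paper's: the upper bound via the case split on whether the last symbol of the source equals the first symbol of the target (inserting the intermediate vertex $bc$ when they differ), and the lower bound via an explicit non-adjacent pair (the paper uses $10$ and $20$; your $01$ and $20$ works equally well). Your preliminary remark that the two defining conditions coincide at $\ell=2$ is a nice clarification but does not change the substance of the argument.
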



Before we proceed to discuss the diameter of $CK(d,\ell)$ for the
remaining cases of $d$ and $\ell$, and as a tool for the proofs, let us introduce
the concept of the \emph{disc representation}.
Each vertex $v$ of the cyclic Kautz digraph $CK(d,\ell)$ can be uniquely
represented on a disc with a marked start (see
Figure~\ref{fig-disc-rep}). We will refer to this as a \emph{disc
representation} of the vertex $v$ (in short, the {\em disc of} $v$).
In fact, there is a straightforward bijection between the vertices of
$CK(d,\ell)$ and the set of discs $D(d,\ell)$ with a marked start,
containing $\ell$ symbols of the alphabet of size $d+1$ in such a way
that no two consecutive symbols are the same.
Clearly, the set of discs $D(d,\ell)$ is closed under the operation of
\emph{swapping a symbol} in a disc in a valid way (that is, exchanging
the symbol for another symbol of the alphabet so that it is different
from both its neighbors). Also, the set $D(d,\ell)$ is closed under
the operation of \emph{moving the marked position} in a disc, which we
will also refer to as \emph{rotating the disc}.
In other words, by performing any of these operations on a disc
representation of a vertex of $CK(d,\ell)$, we again obtain a disc
representation of a vertex of $CK(d,\ell)$.

Observe that there is an arc from a vertex $u$ to a vertex $v$ in
$CK(d,\ell)$ if and only if the disc representation of $v$ can be
obtained from the disc representation of $u$, by swapping the symbol in
the marked position and moving the marked position one step clockwise
(which is equivalent to rotating the disc counterclockwise and leaving
the marked position at a fixed place, say at 12 hours).
We can prove the following (see the Appendix for the details).

\begin{figure}[t]
\centerline{
   \includegraphics[scale=.5]{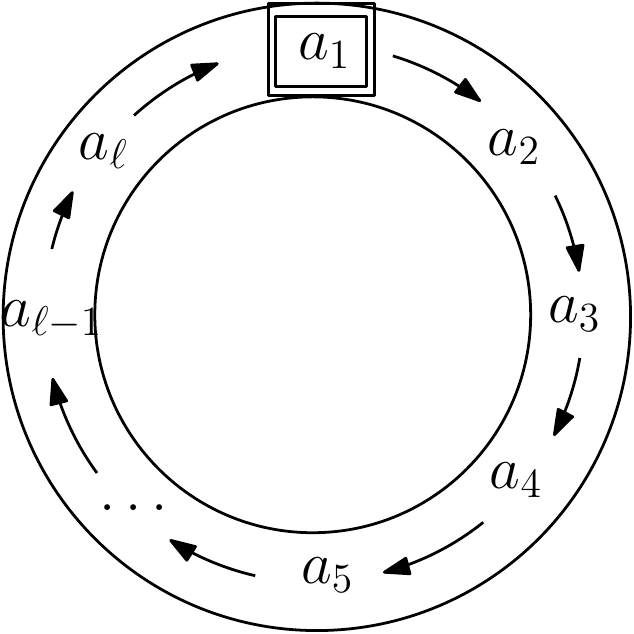}
   \qquad
   \qquad
   \includegraphics[scale=.5]{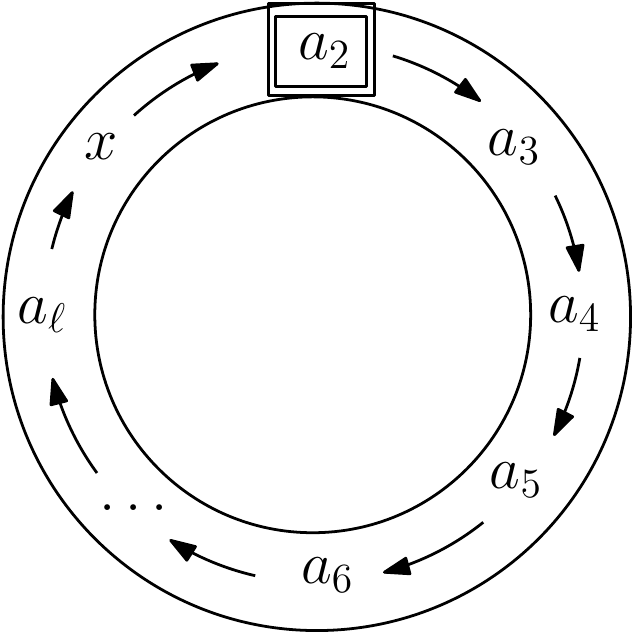}}
\caption{The disc representation of a vertex
$u = {a_1 a_2 {\ldots} a_{\ell-1} a_\ell}$ of $CK(d,\ell)$ and its
neighbor $v = {a_2 a_3 {\ldots} a_\ell x}$, that is, $(u,v)$
is an arc in $CK(d,\ell)$.  }
\label{fig-disc-rep}
\end{figure}

\begin{lemma}
\label{lem:pathIfOperations}
There is a path from a vertex $u$ to a vertex $v$ in $CK(d,\ell)$ if
and only if the disc of $v$ can be obtained from the disc of $u$
by a sequence of operations:
\begin{itemize}
\item Rotation of the disc,
\item Swap of one symbol (in a valid way).
\end{itemize}
\end{lemma}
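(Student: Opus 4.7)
The plan is to use the local correspondence noted immediately before the lemma statement: an arc $u \to v$ of $CK(d,\ell)$ encodes the composite disc operation ``swap the symbol in the marked position, then move the marked position one step clockwise'' (equivalently, rotate the disc one step counterclockwise with the mark fixed). Each direction of the equivalence reduces to simulating, or unpacking, this composite.

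For the forward direction, given a path $u = u_0 \to u_1 \to \cdots \to u_k = v$, I would apply this correspondence to each arc. Every arc contributes one valid swap followed by a single-step rotation; the adjacency conditions $a_{\ell+1}\neq a_\ell$ and $a_{\ell+1}\neq a_2$ in $CK(d,\ell)$ are exactly the conditions that the swapped-in symbol differs from its two disc-neighbors. Concatenating these $k$ mini-sequences converts the disc of $u$ into the disc of $v$ by a legitimate sequence of the two allowed disc operations.

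For the backward direction, I would show that each primitive, rotation and valid swap, can be realized by a path in $CK(d,\ell)$. A pure one-step clockwise rotation of the mark is realized by a single arc in which the new symbol $a_{\ell+1}$ is chosen equal to $a_1$; this arc is legal because $a_1\neq a_2$ and $a_1\neq a_\ell$ (the consecutive and cyclic constraints on $u$), so $a_2 a_3\cdots a_\ell a_1$ is indeed a vertex of $CK(d,\ell)$, and on the disc this arc simply moves the mark without changing any symbol. Iterating yields a rotation by any number of steps. To realize a valid swap of the symbol at some non-marked position, I would first perform several pure rotation arcs to bring that position to the marked position, then perform a single arc that changes this symbol to the prescribed new value (the validity of this arc in $CK(d,\ell)$ is exactly the disc-swap validity hypothesis), and finally perform the remaining pure rotation arcs, $\ell$ in total, to restore the mark to its original place. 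The net effect of these $\ell$ arcs is precisely the intended swap at a single non-marked position, with the mark unchanged.

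Given any sequence of rotations and valid swaps transforming the disc of $u$ into that of $v$, concatenating the above path-level simulations yields the desired $u$-to-$v$ path in $CK(d,\ell)$. The main subtle point I expect is that a single arc is unavoidably a swap \emph{bundled} with a rotation, so isolating a pure swap at a non-marked position forces one to rotate all the way around the disc; once this bundling is handled, the rest is a direct verification that the described arcs exist in $CK(d,\ell)$.
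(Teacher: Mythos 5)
Your proposal is correct and follows essentially the same route as the paper's proof: the forward direction unpacks each arc as a valid swap at the marked position followed by a one-step rotation, and the backward direction realizes a pure rotation by the arc with $a_{\ell+1}=a_1$ and realizes an arbitrary valid swap by rotating the target position to the mark, performing one swapping arc, and rotating the remaining $\ell-k$ steps back. The only cosmetic difference is that you spell out why the rotation arc is always legal ($a_1\neq a_2$ and $a_1\neq a_\ell$), which the paper leaves as ``clearly an arc.''
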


Let us now consider cyclic Kautz digraphs $CK(2,\ell)$ with alphabet $\Sigma
= \{0,1,2\}$,
and let us define a function $sgn: \Sigma^2 \rightarrow \{+,-\}$, which
assigns a $+/-$ sign to an ordered pair of distinct symbols $(a,b)$ as
follows.
\begin{itemize}
\item[] $sgn(0,1) = sgn(1,2) = sgn(2,0) = +$,
\item[] $sgn(1,0) = sgn(2,1) = sgn(0,2) = -$.
\end{itemize}
Given a vertex $v$ of $CK(2,\ell)$, we define an \emph{imprint} of
$v$ as a sequence $im(v)$ of length $\ell$ containing symbols $+$ and
$-$ as follows.
On the $i$-th position ($i \in \{1,\dots,\ell-1\}$)
of the sequence $im(v)$ there is the symbol $sgn(v_i,v_{i+1})$.
On the last position of $im(v)$ there is the symbol $sgn(v_\ell,v_{1})$.
Since each two consecutive symbols of $v$ are distinct, the imprint of
$v$ is well defined and unique.

The imprints have the following property (the proof is postponed to the 
Appendix).

\begin{lemma}
\label{lem:OpsNotChangeImprint}
Let $u$, $v$ be two vertices of $CK(2,\ell)$.
The imprints of $u$ and $v$ have the same number of $+$ and $-$ signs
if and only if the disc of $v$ can be obtained from the disc of $u$
by a sequence of operations:
\begin{itemize}
\item Rotation of the disc,
\item Swap of one symbol (in a valid way).
\end{itemize}
\end{lemma}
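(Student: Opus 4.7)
The plan is to handle the two directions of the biconditional separately.

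For the forward direction, I would check that the multiset of signs in the imprint is preserved by each allowed operation. A rotation of the disc cyclically shifts $(s_1,\ldots,s_\ell)$, so the counts of $+$ and $-$ are manifestly unchanged. For a valid swap at position $i$, the definition forces $v_{i-1}=v_{i+1}$ (since with $|\Sigma|=3$ this is the only way there exists another admissible symbol). Encoding $+$ as $+1\in\mathbb{Z}/3$ and $-$ as $-1\in\mathbb{Z}/3$, the condition $v_{i-1}=v_{i+1}$ is exactly $s_{i-1}+s_i\equiv 0\pmod 3$, i.e.\ $\{s_{i-1},s_i\}=\{+,-\}$. After the swap, $v_i$ is replaced by the unique third symbol, which flips both $s_{i-1}$ and $s_i$; the pair still contains one $+$ and one $-$, so the global counts are preserved.

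For the converse I would split the problem into two subtasks. First, at the level of imprints alone, the operations reduce to cyclic shifts of the length-$\ell$ sign sequence and swaps of two adjacent opposite signs. Viewing an imprint as a cyclic word of $p$ red beads ($+$) and $n$ blue beads ($-$), such a swap is a non-trivial adjacent transposition of differently coloured beads, and a standard bubble-sort-style argument, moving the positions of the reds one step at a time toward a target configuration, shows that any two cyclic words with the same $(p,n)$ are connected by these moves.

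The subtler step, which I expect to be the main obstacle, is that each imprint is shared by three distinct vertices of $CK(2,\ell)$, one for each choice of starting symbol in $\Sigma=\{0,1,2\}$, and I must still show these three vertices lie in a common equivalence class. My plan is to exhibit, for one well-chosen seed vertex, an explicit sequence of rotations and swaps that implements the alphabet rotation $0\mapsto 1\mapsto 2\mapsto 0$; the alternating vertex $0101\ldots$ (when $\ell$ is even) is convenient because every position admits a swap, so one can first perturb the seed into a configuration using all three symbols and then use rotations to realign the starting position. Combining this with the imprint-level moves from the previous step then connects any two vertices with the same sign counts. As a sanity check, when $p\equiv n\pmod 3$ there are exactly $3\binom{\ell}{p}$ vertices with $p$ pluses in the imprint, matching the expected size of a single equivalence class.
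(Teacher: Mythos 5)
Your forward direction is correct and is essentially the paper's argument: rotation obviously preserves the sign counts, and a valid swap at position $i$ forces $v_{i-1}=v_{i+1}$, so the two affected imprint positions carry exactly one $+$ and one $-$ both before and after the swap; your $\mathbb{Z}/3$ encoding is a clean way to say this.

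The converse is where your proposal has a genuine gap, and you have correctly located it yourself: after the imprint-level bubble sort you only know that $u$ can be transformed into \emph{some} vertex whose imprint equals $im(v)$, and each admissible imprint is realized by exactly three vertices (one per choice of first symbol), so you still must connect the three vertices in each fiber. Your plan for this step --- implementing the alphabet rotation $0\mapsto1\mapsto2\mapsto0$ starting from the alternating seed $0101\ldots$ --- remains a sketch: it is restricted to even $\ell$; it addresses only the single imprint class with $p=n=\ell/2$, whereas your imprint-level moves never change $(p,n)$, so you would need a separate seed and a separate fiber argument for every class with $p-n\equiv 0 \pmod 3$; and the explicit sequence of swaps realizing the alphabet rotation is never produced. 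Note also that disc rotations do not act fibrewise (they shift the imprint as well as the first symbol), so transporting fiber-connectivity from the seed imprint to an arbitrary one requires bookkeeping you have not done. The paper sidesteps all of this with one extra idea you are missing: since $|\Sigma|=3$, any two vertices share a symbol, so one first rotates \emph{both} discs to put a common symbol at the marked position, and then normalizes both imprints to the canonical form $(+\cdots+-\cdots-)$ \emph{while keeping the first symbol fixed} (swapping only at positions $2,\ldots,\ell$). Because a vertex is determined by its first symbol together with its imprint, the two normalized vertices are then literally identical. Until you either adopt that common-symbol alignment or actually carry out the fiber argument for all imprint classes and all $\ell$, the converse is not proved.
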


\begin{lemma}
\label{lem:pathIfImprint}
In $CK(2,\ell)$, there is a path from a vertex $u$ to a vertex $v$ if
and only if the imprint of $v$ contains the same number of $+$ and $-$
signs as the imprint of $u$.
\end{lemma}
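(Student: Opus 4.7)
The plan is to obtain this statement essentially for free by concatenating the two preceding lemmas, since they have been set up precisely so that this final equivalence becomes their composition.

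First, I would invoke Lemma~\ref{lem:pathIfOperations}: a (directed) path from $u$ to $v$ in $CK(2,\ell)$ exists if and only if the disc of $v$ can be obtained from the disc of $u$ by some sequence of two kinds of operations, namely rotations of the disc and valid single-symbol swaps. Then I would invoke Lemma~\ref{lem:OpsNotChangeImprint} (specialized, as stated, to $d=2$): the disc of $v$ can be obtained from the disc of $u$ by such a sequence of operations if and only if $im(u)$ and $im(v)$ contain the same number of $+$ and $-$ signs. Chaining the two ``if and only if'' statements yields exactly the claim of Lemma~\ref{lem:pathIfImprint}.

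To make the argument transparent, I would briefly recall why the two elementary operations preserve the sign counts (this is the easy direction of Lemma~\ref{lem:OpsNotChangeImprint}): a rotation cyclically shifts $im(\cdot)$ and therefore keeps the multiset of signs unchanged; a valid swap at an interior position replaces two consecutive entries of $im(\cdot)$ while leaving the total number of $+$ and $-$ invariant, because of the way $sgn$ was defined on the three ordered pairs giving $+$ and the three giving $-$. This gives the necessity of the imprint condition, and Lemma~\ref{lem:OpsNotChangeImprint} provides the sufficiency.

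The only genuine point to check is that the operations appearing in the two lemmas really are the same, so that the composition is legitimate. Both lemmas use identical primitives (rotation of the disc, valid swap of one symbol), so no translation is needed. The main substantive work has already been done in Lemmas~\ref{lem:pathIfOperations} and~\ref{lem:OpsNotChangeImprint}; at the level of Lemma~\ref{lem:pathIfImprint} there is no additional obstacle, and the proof is essentially a one-line chaining of equivalences.
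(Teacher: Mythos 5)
Your proposal is correct and matches the paper exactly: the paper's proof of Lemma~\ref{lem:pathIfImprint} is the same one-line chaining of the equivalences in Lemmas~\ref{lem:pathIfOperations} and~\ref{lem:OpsNotChangeImprint}. The extra recap of why the operations preserve sign counts is harmless but not needed, since that is already part of Lemma~\ref{lem:OpsNotChangeImprint}.
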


\begin{proof}
Follows directly from
Lemmas~\ref{lem:pathIfOperations} and~\ref{lem:OpsNotChangeImprint}.
\end{proof}

We now use Lemma~\ref{lem:pathIfImprint} to prove the following
result.

\begin{lemma}[$d = 2,\ell \geq 3$]
\label{lem:diam2var}
The diameter of $CK(2,4)$ is $7$, and the diameter of $CK(2,3)$ and of
$CK(2,\ell \geq 5)$ is infinite.
\end{lemma}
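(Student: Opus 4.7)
The lemma has three parts: infinite diameter for $CK(2,3)$ and for $CK(2,\ell\geq 5)$, and diameter exactly $7$ for $CK(2,4)$. I would handle the infinite cases together and the finite case separately.

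For the infinite cases, the plan is to apply Lemma~\ref{lem:pathIfImprint} and exhibit two vertices of $CK(2,\ell)$ whose imprints have different numbers of $+$ signs. The key observation is that for any vertex, if its imprint has $p$ pluses and $m$ minuses, then cyclically walking $p$ steps $+$ and $m$ steps $-$ around the triangle $0\to 1\to 2\to 0$ must return to the start, forcing $p - m \equiv 0 \pmod 3$; moreover $p - m \equiv \ell \pmod 2$. For $\ell = 3$ the only admissible $(p,m)$ are $(3,0)$ and $(0,3)$, realized respectively by $u = 012$ and $v = 021$, so by Lemma~\ref{lem:pathIfImprint} there is no path between them. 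For $\ell \geq 5$ these constraints always admit at least two admissible $(p,m)$, and both are realized by explicit constructions based on the periodic sequences $012\,012\ldots$ and $021\,021\ldots$ truncated to length $\ell$ (with one or two extra symbols appended when $\ell \not\equiv 0 \pmod 3$, chosen to keep $v_1 \neq v_\ell$). For instance, if $\ell = 3q+2$ with $q \geq 1$, one can take $v^+ = (012)^q\,01$ (imprint with $\ell - 1$ pluses) and $v^- = (021)^q\,02$ (imprint with one plus). Since the $+$-counts differ, Lemma~\ref{lem:pathIfImprint} yields infinite diameter.

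For $CK(2,4)$, the constraints on $(p,m)$ above force $(p,m) = (2,2)$, so every vertex has imprint class $(2,2)$ and by Lemma~\ref{lem:pathIfImprint} the digraph is strongly connected. For $D \geq 7$ I would exhibit the pair $(0102, 2101)$; a BFS from $u = 0102$ (one of the six out-degree-$1$ vertices) produces successive layers $\{0102\}$, $\{1020\}$, $\{0201,0202\}$, $\{2010,2020,2021\}$, $\{0101,0212\}$, $\{1010,1012,2120,2121\}$, $\{0121,1202,1210,1212\}$, $\{2101\}$, exhausting all $18$ vertices and giving $d(0102,2101) = 7$. For $D \leq 7$, I would exploit that the symmetric group $S_3$ permuting the alphabet $\{0,1,2\}$ acts as a group of digraph automorphisms, partitioning the $18$ vertices into three orbits of size $6$ with representatives $0101$, $0102$, $0121$ (corresponding to the pattern classes $abab$, $abac$, $abcb$). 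A BFS from each representative yields eccentricities $6$, $7$, $6$ respectively, giving $D \leq 7$.

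The main obstacle is the upper bound for $CK(2,4)$. The $S_3$-symmetry reduction cuts the work to three BFS computations, but the reason the diameter reaches $7$ (rather than $5$ or $6$) is the bottleneck created by the six out-degree-$1$ vertices of pattern $abac$: each has a unique successor (of pattern $abcb$), so a BFS rooted at an $abac$-vertex is forced through two narrow initial layers before expanding. A more structural alternative would analyse the type-transition digraph $abab \to \{abab, abac\}$, $abac \to abcb$, $abcb \to \{abab, abac\}$ layer-by-layer, but some explicit case analysis seems unavoidable.
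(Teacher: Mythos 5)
Your proposal is correct, and for the infinite-diameter cases it follows exactly the paper's route: invoke Lemma~\ref{lem:pathIfImprint} and exhibit, for each residue of $\ell$, two vertices whose imprints have different numbers of $+$ signs (the paper's Table~\ref{tab-not-connected-vertices} uses the same periodic $012$/$021$ constructions as you do). Where you genuinely add something is twofold. First, your observation that an imprint with $p$ pluses and $m$ minuses must satisfy $p+m=\ell$ and $p-m\equiv 0 \pmod 3$ turns the paper's ad hoc examples into a classification of the reachability classes: it proves at once that $(2,2)$ is the \emph{only} admissible class for $\ell=4$ (so $CK(2,4)$ is strongly connected), something the paper only says one ``could argue,'' and that at least two classes are populated for $\ell=3$ and $\ell\geq 5$. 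Second, for the value $7$ of the diameter of $CK(2,4)$ the paper gives no argument at all beyond ``can be checked in Figure~\ref{fig:donut},'' whereas you supply an actual verification: the BFS from $0102$ that you list is correct (layers of sizes $1,1,2,3,2,4,4,1$ exhausting all $18$ vertices, with $2101$ at distance $7$), and the $S_3$-symmetry reduction to the three orbit representatives $0101$, $0102$, $0121$ with out-eccentricities $6,7,6$ legitimately yields the upper bound, since out-eccentricity is constant on automorphism orbits. The only cosmetic gap is that you spell out the truncated-periodic vertices only for $\ell\equiv 2\pmod 3$ and wave at the other residues, but the paper's table shows these are routine; your version is, on balance, more complete than the original.
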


\begin{proof}
We could argue that the diameter of $CK(2,4)$ is finite by showing
that all the vertices of $CK(2,4)$ have imprint that contains exactly
two $+$ signs and two $-$ signs.
In fact, the diameter of $CK(2,4)$ is $7$, which can be checked in
Figure~\ref{fig:donut}.

\begin{table}[t]
\caption{
Two vertices $u$ and $v$ of $CK(2,\ell)$,
for $\ell = 3$  and for $\ell \geq 5$,
such that the numbers of $+$ and $-$ signs in their imprints
differ, implying that $u$ and $v$ belong to different connected
components.}
\center{ \begin{tabular}{l|c|c}
 & $u$, imprint of $u$ & $v$, imprint of $v$ \\
\hline
$\ell=3$ & ${0 {\bf 1 2}}$ & ${0 {\bf 2 1}}$ \\
 & $(+  +  +)$ & $(-  -  -)$ \\
\hline
$\ell=3r, \ell \geq 5$ &
 ${0 {\bf 1 2} 0 1 2 {\ldots} 0 1 2}$ &
 ${0 {\bf 2 1} 0 1 2 {\ldots} 0 1 2}$ \\
 &
 $(+  +  +  +  +  +  {\ldots}  +  +  + )$ &
 $(-  -  -  +  +  +  {\ldots}  +  +  +)$ \\
\hline
$\ell=3r+1, \ell \geq 5$ &
 ${0 {\bf 1 2} 0 1 2 {\ldots} 0 1 2 1}$ &
 ${0 {\bf 2 1} 0 1 2 {\ldots} 0 1 2 1}$ \\
 &
 $(+  +  +  +  +  +  {\ldots}  +  +  -  -)$ &
 $(-  -  -  +  +  +  {\ldots}  +  +  -  -)$ \\
\hline
$\ell=3r+2, \ell \geq 5$ &
 ${0 {\bf 1 2} 0 1 2 {\ldots} 0 1 2 0 1}$ &
 ${0 {\bf 2 1} 0 1 2 {\ldots} 0 1 2 0 1}$ \\
 &
 $(+  +  +  +  +  +  {\ldots}  +  +  +  +  -)$ &
 $(-  -  -  +  +  +  {\ldots}  +  +  +  +  -)$ \\
\end{tabular} }
\label{tab-not-connected-vertices}
\end{table}
For $\ell = 3$  and for $\ell \geq 5$ we show that the diameter
of $CK(2,\ell)$ is infinite.
In particular, we distinguish four cases depending on the value of
$\ell$ and, for each case, we pick two vertices $u$ and $v$ of
$CK(2,\ell)$ such that the numbers of $+$ and $-$ signs in the imprints
of the two vertices differ (see Table~\ref{tab-not-connected-vertices}).
Then, by Lemma~\ref{lem:pathIfImprint}, there is no path from $u$ to
$v$ in $CK(2,\ell)$.
\end{proof}

We proceed with the remaining values of $d$ and $\ell$, and give
matching upper and lower bounds on the diameter of $CK(d,\ell)$.

\begin{lemma}[$d=3, \ell \geq 3$, upper bound]
\label{lem:diam41}
The diameter of $CK(3,\ell \geq 3)$ is at most $2\ell - 1$.
\end{lemma}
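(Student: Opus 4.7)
The plan is to exhibit, for any two vertices $u = a_1 a_2 \ldots a_\ell$ and $v = b_1 b_2 \ldots b_\ell$ of $CK(3, \ell)$, a walk of length at most $2\ell - 1$ between them. A walk of length $k$ from $u$ to $v$ can be encoded as a symbol sequence $s_1 s_2 \ldots s_{k+\ell}$ over the alphabet $\Sigma = \{0,1,2,3\}$ with $s_1 \ldots s_\ell = u$ and $s_{k+1} \ldots s_{k+\ell} = v$, such that consecutive symbols differ ($s_i \ne s_{i+1}$) and every length-$\ell$ window has first and last symbol distinct ($s_i \ne s_{i+\ell-1}$). I write $m = k - \ell$ for the number of intermediate symbols $c_1, \ldots, c_m$ sandwiched between $u$ and $v$, and aim to exhibit some $m \in \{0, 1, \ldots, \ell - 1\}$ for which a valid intermediate block exists.

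For any fixed $m$, the constraints decompose into (i) ``external'' conditions involving only symbols of $u$ and $v$ (in particular $a_i \ne b_{i - 1 - m}$, coming from windows that span from $u$ to $v$ while skipping the intermediates), and (ii) ``local'' conditions on each $c_j$: at most three forbidden values at interior positions and at most four at the boundary positions. Since $|\Sigma| = 4$, a set of at most three forbidden values always leaves an admissible symbol, so interior $c_j$'s are never obstructed.

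I would then argue that at least one $m$ in the allowed range works. The case $m = 0$ (direct shift) succeeds precisely when $a_\ell \ne b_1$ and $a_i \ne b_{i-1}$ for $i = 2, \ldots, \ell$, producing a walk of length $\ell$. If $m = 0$ fails, the shift $m = 1$ replaces these by analogous conditions at offset two together with a single intermediate $c$ subject to four local constraints, satisfiable unless $\{a_\ell, a_2, b_1, b_{\ell-1}\} = \Sigma$. Continuing in this way, the case $m = \ell - 1$ eliminates all external conditions and only the boundary intermediate $c_{\ell-1}$ is potentially tight, having forbidden set $\{a_\ell, b_{\ell-1}, b_1, c_{\ell-2}\}$; here I would choose $c_{\ell-2}$ with look-ahead so that it coincides with an element of $\{a_\ell, b_{\ell-1}, b_1\}$, and back up one further step to re-select $c_{\ell-3}$ in the rigid subcase $\{a_\ell, b_{\ell-1}, b_1\} = \{a_{\ell-1}, b_{\ell-2}, c_{\ell-3}\}$.

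The main obstacle I expect is the joint case analysis: ruling out the possibility that every $m \in \{0, 1, \ldots, \ell - 1\}$ fails simultaneously. Each such failure imposes a specific coincidence of symbols between $u$ and $v$, and I would show that these required coincidences become mutually incompatible given that $u$ and $v$ are valid vertices of $CK(3, \ell)$; the alphabet's ``free'' fourth symbol at each local constraint is what drives this incompatibility. A careful enumeration of the patterns of coincidences $a_i = b_j$ completes the argument.
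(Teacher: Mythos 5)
Your overall framework (encoding a walk of length $k$ from $u$ to $v$ as a word $u\,c_1\cdots c_m\,v$ with $m=k-\ell$ intermediate symbols, and choosing the $c_j$ greedily against at most three forbidden values per interior position) is the same as the paper's, but the step where the real difficulty lives is handled incorrectly. You propose to rescue the case $m=\ell-1$ by look-ahead: when $c_{\ell-1}$ faces the four forbidden symbols $\{a_\ell,b_{\ell-1},b_1,c_{\ell-2}\}$, re-select $c_{\ell-2}$ (and if necessary $c_{\ell-3}$) so that $c_{\ell-2}$ lands inside $\{a_\ell,b_{\ell-1},b_1\}$. This cannot work in general, because the interior symbols can be completely forced: each $c_j$ already has the three forbidden values $c_{j-1}$, $a_{j+1}$, $b_j$, which may be pairwise distinct, leaving a unique choice and no slack for backtracking. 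Concretely, in $CK(3,3)$ take $u=012$ and $v=013$: with $m=\ell-1=2$ the constraints force $c_1\notin\{2,1,0\}$, hence $c_1=3$, and then $c_2$ must avoid $\{3,2,1,0\}=\Sigma$. No intermediate block of length $\ell-1$ exists for this pair at all, so no amount of re-selection at $m=\ell-1$ helps; and ``backing up one further step'' can in principle cascade all the way to $c_1$ without terminating, since each earlier $c_j$ may also be forced.

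The missing idea is what to do when $m=\ell-1$ fails. The paper's proof extracts structural information from the failure itself: if $u_\ell$, $c_{\ell-2}$, $v_1$, $v_{\ell-1}$ are all distinct, then in particular $u_\ell\ne v_1$ and (since the alphabet has only four symbols and $u_{\ell-1}$ differs from both $u_\ell$ and $c_{\ell-2}$) $u_{\ell-1}\in\{v_1,v_{\ell-1}\}$. These two facts are exactly what is needed to make the attempt with $m=\ell-2$ succeed: the only external window condition at $m=\ell-2$ is $u_\ell\ne v_1$, and the coincidence $u_{\ell-1}\in\{v_1,v_{\ell-1}\}$ collapses the four forbidden values of the last intermediate symbol to at most three. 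Your alternative --- a ``careful enumeration'' showing that the failure conditions for all $m\in\{0,\dots,\ell-1\}$ are mutually incompatible --- is not carried out, and this implication from the failure at one value of $m$ to success at another is precisely the content of the proof; without it your argument is a plan rather than a proof. (In the example above, $m=1$ does succeed, via the word $0123013$, exactly as the fallback to length $\ell-2$ predicts.)
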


\begin{proof}
We prove the claim by showing that between any two vertices $u$, $v$ of
$CK(3,\ell \geq 3)$, there is always a path of length at most $2\ell
- 1$ or $2\ell - 2$.
In particular, we will show that there is either a sequence
$x = {x_1  x_2  {\ldots}  x_{\ell-1}}$ of $\ell - 1$ symbols or
a sequence $y = {y_1  y_2  {\ldots}  y_{\ell-2}}$ of $\ell - 2$ symbols
such that by concatenating the sequences of $u$, one of $x$ or $y$,
and $v$, we obtain a sequence with the property that any
contiguous subsequence of length $\ell$ forms a vertex of
$CK(3,\ell \geq 3)$.

Given the vertices $u$ and $v$, let us first try to find such sequence
$x$ of $\ell - 1$ symbols.
From the definition of a vertex of a cyclic Kautz digraph it follows that
in the desired sequence, the symbol $x_i$ must differ from the symbols
$x_{i-1}$, $x_{i+1}$, $u_{i+1}$, and $v_i$ (also, $x_1$ differs from
$u_\ell$, and $x_{\ell-1}$ differs from $v_1$).
We adopt the following strategy. We choose the symbols for $x_1, x_2,
{\dots}, x_{\ell-1}$ one by one in this order. Our alphabet has $4$
symbols, but for each $x_i$ (with the exception of the last one) there
are exactly $3$ restrictions given. We therefore can choose the
symbols for $x_1, x_2, {\dots}, x_{\ell-2}$ that meet the
requirements.
However, a problem may arise when choosing the symbol for
$x_{\ell-1}$.
For $x_{\ell-1}$ we need to choose a symbol that differs from
$u_\ell$, $x_{\ell-2}$, $v_1$, and $v_{\ell-1}$.
We distinguish two cases.
If any two of these $4$ symbols are the same, there remains a symbol
which can be assigned to $x_{\ell-1}$ and the constructed sequence $x$
of length $\ell - 1$ satisfies the requirements.
Otherwise, if all the $4$ symbols $u_\ell$, $x_{\ell-2}$, $v_1$, and
$v_{\ell-1}$ differ, say
$u_\ell = a$, $x_{\ell-2} = b$, $v_1 = c$, and $v_{\ell-1} = d$,
we cannot assign a symbol to $x_{\ell-1}$ that would satisfy the
requirements.
We have the following observations:
\begin{itemize}
\item[1)]
$u_\ell \not= v_1$;
\item[2)]
$u_\ell = a$ implies $u_{\ell-1} \not= a$;
\item[3)]
$x_{\ell-2} = b$ implies $u_{\ell-1} \not= b$.
\end{itemize}
We interrupt the search for a sequence $x$ of $\ell - 1$ symbols.
Instead, we find a sequence $y$ of $\ell - 2$ symbols as follows.
In the desired sequence, a symbol $y_i$ must differ from the
symbols $y_{i-1}$, $y_{i+1}$, $u_{i+1}$, and $v_{i+1}$ (also, $y_1$
differs from $u_\ell$, and $y_{\ell-2}$ differs from $v_1$).
Moreover, since $y$ has length $\ell-2$, the subsequence
${u_\ell y_1 {\ldots} y_{\ell-2} v_1}$ also needs to be a vertex of
$CK(3,\ell \geq 3)$, thus $u_\ell$ must differ from $v_1$.
The condition $u_\ell \not= v_1$ is satisfied by the above observation
1). To choose the symbols for $y_1, y_2, {\dots}, y_{\ell-2}$ we adopt
a similar strategy as above. Again, we can choose the symbols for
$y_1, y_2, {\dots}, y_{\ell-3}$ one by one in this order and meet the
requirements. Finally, for $y_{\ell-2}$ we need to choose a symbol
that differs from $u_{\ell-1}$, $y_{\ell-3}$, $v_1$, and $v_{\ell-1}$.
Since the previous search for $x$ failed, we know that $v_1 = c$,
$v_{\ell-1} = d$, and $u_{\ell-1} \not= a,b$. This implies that
$u_{\ell-1}$ has the same symbol as either $v_1$ or $v_{\ell-1}$, and
thus there remains one symbol which can be assigned to $y_{\ell-2}$.

Therefore, we can either find a valid sequence $x$ of length $\ell-1$,
or a valid sequence $y$ of length $\ell-2$. In both cases this gives
us an upper bound of $2\ell-1$ on the length of the  shortest path
between any pair of nodes of $CK(3,\ell \geq 3)$.
\end{proof}

\begin{lemma}[$d \geq 4, \ell \geq 4$, upper bound]
\label{lem:diam51}
The diameter of $CK(d\geq 4,\ell \geq 4)$ is at most $2\ell - 2$.
\end{lemma}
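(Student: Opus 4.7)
The approach would mirror the bridge construction from Lemma~\ref{lem:diam41}: given two vertices $u = u_1\cdots u_\ell$ and $v = v_1\cdots v_\ell$ of $CK(d,\ell)$ with $d\geq 4$ and $\ell\geq 4$, I would produce a bridging sequence $b_1\cdots b_k$ of length $k\leq \ell-2$ such that the concatenation $u_1\cdots u_\ell\, b_1\cdots b_k\, v_1\cdots v_\ell$ represents a walk of length $\ell+k$ in $CK(d,\ell)$. Equivalently, every length-$\ell$ contiguous window of the concatenation must be a valid vertex of $CK(d,\ell)$, i.e., its consecutive symbols differ and its first and last symbols differ.

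Each bridge symbol $b_j$ is constrained by its two consecutive neighbors and by its two cyclic partners at distance $\ell-1$ (one lying in $u$, the other in $v$). A direct tabulation gives at most three forbidden symbols for an interior bridge position and at most four for the endpoint positions $b_1$ and $b_k$. Since the alphabet has $d+1\geq 5$ symbols, a greedy left-to-right choice of $b_1,b_2,\ldots,b_k$ always succeeds, provided that the cyclic constraints directly pairing positions of $u$ with positions of $v$ are themselves satisfied. The plan is to split into two cases according to the seam.

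In Case A, where $u_\ell \neq v_1$, I would take $k = \ell-2$; the unique cross-cyclic constraint is $u_\ell \neq v_1$, which holds by assumption, so the greedy selection yields a path of length $2\ell-2$. In Case B, where $u_\ell = v_1$, a bridge of length $\ell-2$ would force the invalid window $u_\ell b_1\cdots b_{\ell-2} v_1$; I would therefore shorten the bridge to $k = \ell-3$. The new cross-cyclic constraints $u_{\ell-1}\neq v_1$ and $u_\ell \neq v_2$ reduce under $u_\ell = v_1$ to consecutive-symbol conditions on $u$ and $v$ and hence hold automatically, so the greedy selection yields a path of length $2\ell-3 \leq 2\ell-2$. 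The main obstacle is the careful bookkeeping at the seam between $u$, the bridge, and $v$: one needs to verify that the cyclic partners of each bridge symbol fall where expected and that in Case B every potential cross-constraint degenerates into a condition already guaranteed by the validity of $u$ or $v$. The degenerate sub-case $\ell=4$ in Case B, where the bridge collapses to a single symbol, should be handled by direct inspection.
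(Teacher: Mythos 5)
Your proposal is correct and follows essentially the same route as the paper's proof: split on whether $u_\ell = v_1$, use a greedy left-to-right bridge of length $\ell-2$ (resp.\ $\ell-3$), and observe that the at most four constraints per position are beatable with an alphabet of $d+1\geq 5$ symbols, the seam constraints in the second case degenerating to consecutive-symbol conditions via $u_\ell=v_1$. No substantive difference from the paper's argument.
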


\begin{proof}
We use the same strategy as in Lemma~\ref{lem:diam41}.
We show that for any pair of vertices $u$, $v$ of $CK(d\geq 4,\ell
\geq 4)$, there
is always a path of length at most $2\ell - 2$ or $2\ell - 3$.
In particular, we show that there is either a sequence
$y = {y_1  y_2  {\ldots}  y_{\ell-2}}$ of $\ell-2$ symbols or a sequence
$z = {z_1  z_2  {\ldots}  z_{\ell-3}}$ of $\ell-3$ symbols, such that the
concatenation of $u$, one of $y$ or $z$, and $v$ forms
a sequence such that any contiguous subsequence of length $\ell$
is a vertex of $CK(d\geq 4,\ell \geq 4)$.
The details are given in the Appendix.
\end{proof}

\begin{lemma}[($d\geq 3, \ell = 3$) \& ($d=3, \ell \geq 3$) \& ($d \geq 4, \ell \geq 3$), lower bounds]
$(a)$ The diameter of $CK(d \geq 3,3)$ is at least $5 = 2\ell - 1$.
$(b)$ The diameter of $CK(3,\ell \geq 3)$ is at least $2\ell - 1$.
$(c)$ The diameter of $CK(d\geq 4,\ell \geq 3)$ is at least $2\ell - 2$.
\label{lem:diam_lb}
\end{lemma}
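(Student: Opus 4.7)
Plan. Every directed path of length $k$ from $u = u_1\cdots u_\ell$ to $v = v_1\cdots v_\ell$ in $CK(d,\ell)$ encodes a word $s_1 s_2\cdots s_{k+\ell}$ over $\Sigma$ with $s_1\cdots s_\ell = u$, $s_{k+1}\cdots s_{k+\ell} = v$, subject to the two local constraints $s_i\neq s_{i+1}$ and $s_i\neq s_{i+\ell-1}$ at every valid $i$. Each lower bound therefore reduces to exhibiting one pair $(u,v)$ for which no such word of length strictly smaller than the claimed bound exists, and then ruling out each short case one by one.

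For part (a), I would take $u = 012$ and $v = 210$, which lie in $CK(d,3)$ for every $d\geq 2$, and rule out each $k\in\{1,2,3,4\}$ directly: $k = 1$ fails because $u_2 u_3 = 12 \neq 21 = v_1 v_2$; $k = 2$ forces the word $01210$, which violates $s_2\neq s_4$; $k = 3$ forces the word $012210$, which violates $s_3\neq s_4$; and $k = 4$ has $s_3 = s_5 = 2$, so the window $s_3 s_4 s_5$ violates the cyclic condition $s_3\neq s_5$ no matter how the interior symbol is chosen. Since only the symbols of $\{0,1,2\}$ appear in this argument, the lower bound $5 = 2\ell - 1$ holds uniformly for all $d\geq 3$.

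For parts (b) and (c), the case $\ell = 3$ is already subsumed by part (a), so I may assume $\ell\geq 4$. The plan is to localize the obstruction at the gluing region between $u$ and $v$. For $k\geq\ell$, the cyclic constraint $s_i\neq s_{i+\ell-1}$ applied at indices $i\in\{k-\ell+2,\ldots,\ell\}$ is precisely $u_i\neq v_{i-(k-\ell+1)}$, so a single planted equality $u_i = v_{i-(k-\ell+1)}$ kills length $k$. I would fix $u$ to be a simple pattern such as $u = 0\,1\,2\,\cdots(\ell{-}1)$ when the alphabet permits (and a suitable cyclic Kautz variant otherwise), and then design $v$ position by position so that each $k$ in the forbidden range $\{\ell,\ldots,2\ell-2\}$ (for part (b)) or $\{\ell,\ldots,2\ell-3\}$ (for part (c)) is killed by one such planted equality, while $v$ itself remains a valid vertex and the target walk of length $2\ell-1$ (resp.\ $2\ell-2$) is still achievable. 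For part (b), with $d=3$, the small alphabet additionally forces the last admissible intermediate letter to be over-constrained at the extra value of $k$, which is exactly the failure scenario isolated in the proof of Lemma~\ref{lem:diam41} where $u_\ell, u_{\ell-1}, v_1, v_{\ell-1}$ are forced to be four distinct symbols; this is what upgrades the bound from $2\ell-2$ to $2\ell-1$. The short cases $k<\ell$ are eliminated generically by ensuring $u_\ell\neq v_1$, which already forbids any valid prefix-suffix overlap.

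The main obstacle will be designing $v$ uniformly in $\ell$ so that no internal shortcut through the middle of $u$ or $v$ allows a walk shorter than intended. My approach is to concentrate the planted equalities within the last few positions of $u$ and the first few positions of $v$, and to pad the remaining positions of $v$ with a periodic pattern that is sufficiently disjoint from $u$ at the relevant offsets to avoid stray coincidences. The resulting finite case analysis at the boundary is routine but delicate: one must verify not only that every short $k$ fails, but also that the intended walk of length $2\ell-1$ or $2\ell-2$ genuinely exists, matching the upper bounds from Lemmas~\ref{lem:diam41} and~\ref{lem:diam51}.
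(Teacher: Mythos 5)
Your part (a) is correct and is essentially the paper's own argument: the same pair $u=012$, $v=210$, and the same elimination of $k=1,2,3,4$ one by one.

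Parts (b) and (c), however, are a plan rather than a proof: no explicit vertices $u,v$ are produced and none of the case analysis is carried out. More seriously, for part (b) the plan as stated cannot succeed. Writing $k=\ell+z$, the only constraints linking a position of $u$ to a position of $v$ at length $k$ are $u_i\neq v_{i-z-1}$ for $z+2\leq i\leq\ell$. For $z=\ell-2$ (i.e.\ $k=2\ell-2$) this leaves the single constraint $u_\ell\neq v_1$, so the only available planted equality is $u_\ell=v_1$ --- which contradicts your own stipulation that the cases $k<\ell$ be handled by ensuring $u_\ell\neq v_1$. Worse, once $u_\ell=v_1$ is planted, the length $k=2\ell-3$ admits no planted equality at all: the two candidates are $u_{\ell-1}=v_1$ and $u_\ell=v_2$, and both are excluded since $u_{\ell-1}\neq u_\ell=v_1$ and $v_2\neq v_1=u_\ell$. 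Hence at least one forbidden length must be excluded by reasoning about the intermediate symbols rather than by a $u$--$v$ coincidence, and this is precisely the substantial part of the paper's proof of (b): for the even values of $z$ it shows that any connecting sequence $x$ would have to avoid the symbols $0$ and $1$ entirely, hence alternate between $2$ and $3$, and a parity count then forces two equal adjacent symbols in the middle of $x$. Your substitute for this --- invoking the failure scenario of Lemma~\ref{lem:diam41} in which $u_\ell,x_{\ell-2},v_1,v_{\ell-1}$ are four distinct symbols --- is misdirected: that scenario governs when a length-$(\ell-1)$ connecting sequence cannot be completed, and there the conclusion is that a length-$(\ell-2)$ sequence \emph{does} exist; the lower bound needs the opposite, namely that no length-$(\ell-2)$ sequence (and nothing shorter) exists. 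For part (c) the forbidden range stops at $2\ell-3$, so a planted-equality-only argument is feasible and is what the paper does (with $u=\ldots 0101012$ and $v=1320202\ldots$, splitting into four parity cases), but you still need to exhibit the vertices and verify those cases; as written, nothing is checked.
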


\begin{proof}
To prove a lower bound on the diameter of $CK(d,\ell)$, it suffices to
identify two vertices that are at the claimed distance.
In particular, we can argue the following (for more details, see
the proof of this lemma in the Appendix).
\begin{itemize}
\item The vertices $u = {0 1 2}$ and $v = {2 1 0}$ of $CK(d \geq 3,3)$
are at the distance $5 = 2\ell - 1$.
\item
If $\ell$ is odd, $u = {0 1 0 1 {\ldots} 0 1 2}$ and $v
= {2 1 0 {\ldots} 1 0 1 0}$ of $CK(3,\ell \geq 3)$ are at the distance
$2\ell - 1$.
\item
If $\ell$ is even, the vertices $u = {1 0 2 0 2 0 {\ldots}
2 0 1 2}$ and $v = {2 1 3 0 2 0 {\ldots} 2 0 1 0}$ of $CK(3,\ell
> 3)$ (for example, for $\ell=4$, $u = {2 0 1 2}$, and
$v = {2 1 3 0}$) are at the distance $2\ell - 1$.
\item
Concerning $CK(d\geq 4,\ell \geq 3)$, the vertices
$u = {{\ldots}  0 1 0 1 0 1 2}$ ($u$ begins with ${0 1}$ if
$\ell$ is odd and with ${1 0}$ if $\ell$ is even),
and $v = {1 3 2 0 2 0 2 {\ldots}}$
of $CK(d\geq 4,\ell \geq 3)$ are at the distance
$2\ell - 2$.
\end{itemize}
\end{proof}

\begin{figure}[t]
   \centering{\includegraphics[scale=.5]{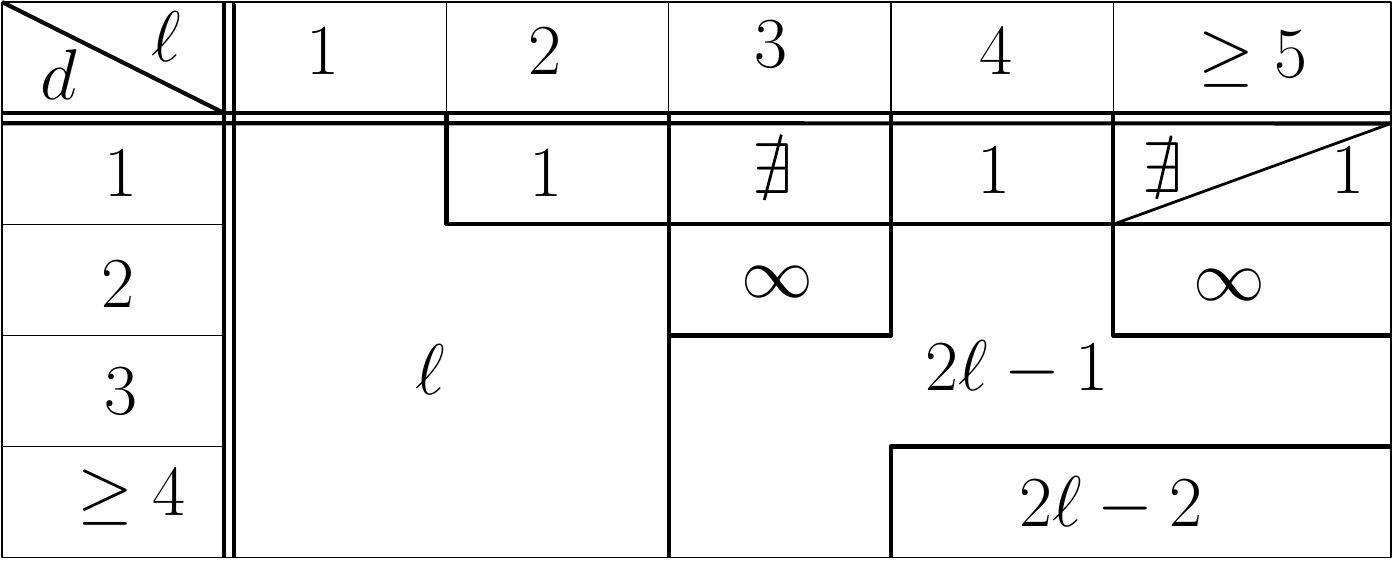}}
\caption{ Summary of the diameter of $CK(d,\ell)$, depending on the
values of $d$ and $\ell$. }
\label{fig-diam-overview}
\end{figure}
The above lemmas specify the diameter of $CK(d,\ell)$ for all the
values of $d$ and $\ell$; we summarize them into the following theorem
(see Figure~\ref{fig-diam-overview} for an overview).

\begin{theorem}
The diameter of $CK(d,1)$ is $1$.
The diameter of $CK(1,\ell \geq 2)$ is $1$ if $\ell$ is even ($CK(1,\ell \geq 2)$ does not exist if $\ell$ is odd).
The diameter of $CK(d \geq 2,2)$ is $2$.
The diameter of $CK(2,3)$ is infinite.
The diameter of $CK(2,4)$ is $7\ (= 2\ell - 1)$.
The diameter of $CK(2,\ell \geq 5)$ is infinite.
The diameter of $CK(d\geq 3,3)$ is $5\ (= 2\ell - 1)$.
The diameter of $CK(3,\ell \geq 3)$ is $2\ell - 1$.
Finally, the diameter of $CK(d\geq 4,\ell \geq 4)$ is $2\ell - 2$.
\end{theorem}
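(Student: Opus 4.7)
The plan is straightforward: this theorem is a compilation of all the lemmas proved earlier in the section, so the proof consists of collecting the cases and citing the matching upper and lower bounds. I would first observe that the stated cases partition the entire parameter space of pairs $(d,\ell)$ with $d\geq 1$ and $\ell\geq 1$, noting along the way that $CK(1,\ell)$ is empty for odd $\ell\geq 3$ because the requirement $a_1\neq a_\ell$ forces odd length to be incompatible with an alphabet of size $2$ (where consecutive symbols alternate).

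Then I would dispatch the cases in order. The first three entries (the cases $\ell=1$; $d=1$ and even $\ell\geq 2$; $d\geq 2$ and $\ell=2$) follow directly from Lemma~\ref{lem:diamd1} and Lemma~\ref{lem:diamd2}. The three entries concerning $d=2$ (namely $\ell=3$, $\ell=4$, and $\ell\geq 5$) follow from Lemma~\ref{lem:diam2var}. For the last three entries, I would combine the matching upper and lower bounds: the case $CK(d\geq 3,3)$ uses the upper bound $2\ell-1=5$ from Lemma~\ref{lem:diam41} together with Lemma~\ref{lem:diam_lb}(a); the case $CK(3,\ell\geq 3)$ uses Lemma~\ref{lem:diam41} together with Lemma~\ref{lem:diam_lb}(b); and the case $CK(d\geq 4,\ell\geq 4)$ uses Lemma~\ref{lem:diam51} together with Lemma~\ref{lem:diam_lb}(c).

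The only small point that requires care is avoiding double coverage between the cases $CK(d\geq 3,3)$ and $CK(3,\ell\geq 3)$ at the pair $(d,\ell)=(3,3)$: both give diameter $5=2\ell-1$, so they are consistent, and I would remark on this explicitly to keep the partition unambiguous. No new mathematical ideas are needed beyond the lemmas already proved, so I do not anticipate a real obstacle; the task is purely bookkeeping. A short summary figure (Figure~\ref{fig-diam-overview}) can be referenced to help the reader verify that all cases of $(d,\ell)$ are covered exactly once.
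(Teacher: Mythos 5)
Your proposal matches the paper's proof, which is exactly the one-line observation that the theorem ``follows directly from Lemmas~\ref{lem:diamd1}, \ref{lem:diamd2}, and~\ref{lem:diam2var}--\ref{lem:diam_lb}''; your case-by-case bookkeeping (including the overlap check at $(d,\ell)=(3,3)$ and the reference to Figure~\ref{fig-diam-overview}) is just a more explicit version of the same argument. The only caveat, which you share with the paper itself, is that the upper bound for $CK(d\geq 4,3)$ is not literally covered by Lemma~\ref{lem:diam41} (stated only for $d=3$) nor by Lemma~\ref{lem:diam51} (stated only for $\ell\geq 4$), though the argument of Lemma~\ref{lem:diam41} extends verbatim to larger alphabets.
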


\begin{proof}
Follows directly from Lemmas
\ref{lem:diamd1}, 
\ref{lem:diamd2},
and~\ref{lem:diam2var}--\ref{lem:diam_lb}.
\end{proof}

\section{The modified cyclic Kautz digraphs}
\label{sec:modificats}

\begin{figure}[t]
\vskip-.75cm
    \begin{center}
  \includegraphics[width=10cm]{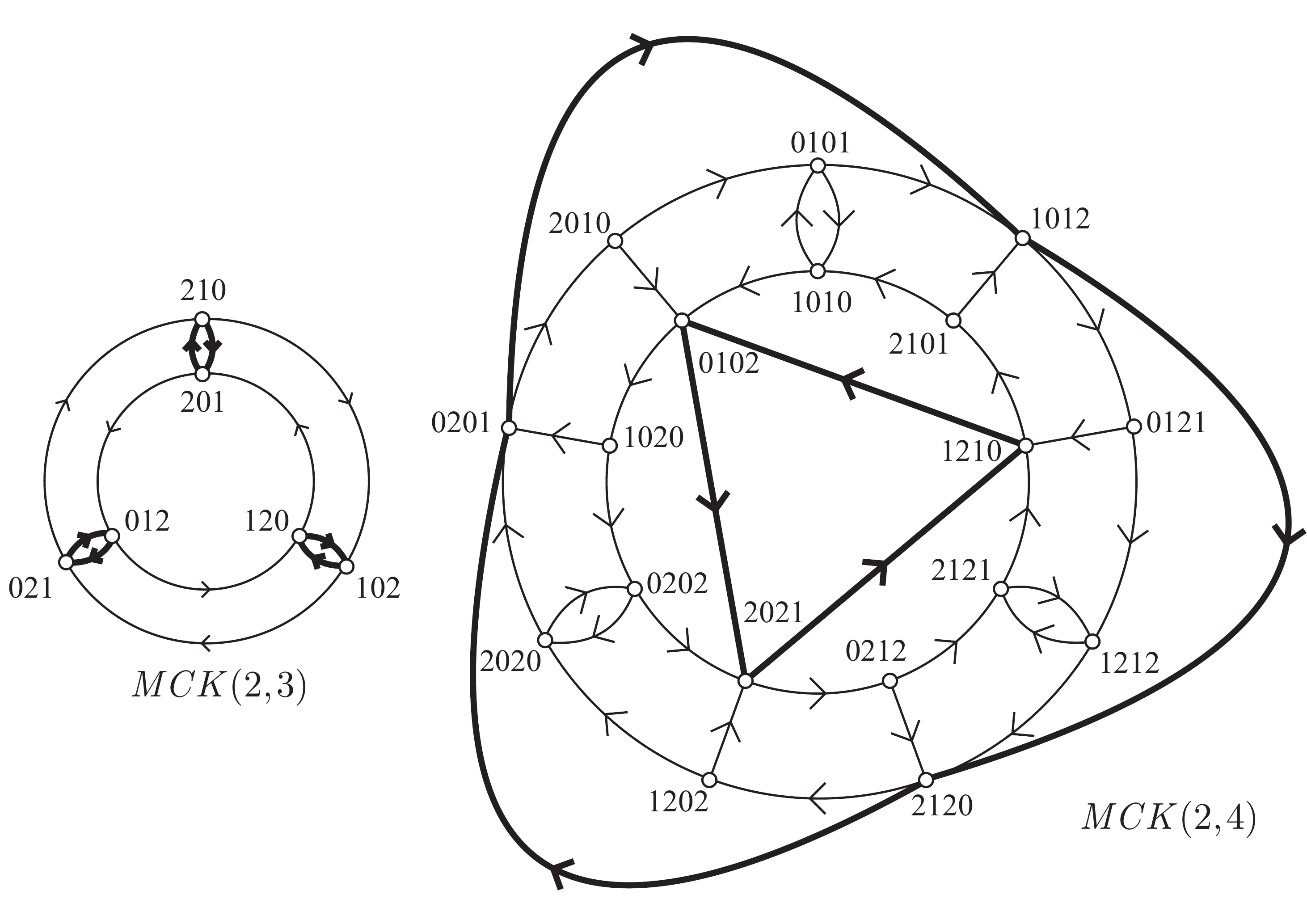}
  \end{center}
  \vskip-.5cm
  \caption{Modified cyclic Kautz digraphs $MCK(2,3)$ and $MCK(2,4)$ (the thick lines are the arcs added with respect to the corresponding cyclic Kautz digraphs).}\label{fig:donut-modificat}
\end{figure}

Recall that the diameter of the Kautz digraphs is optimal, that is, for a fixed out-degree $d$ and number of vertices $(d+1)d^{\ell-1}$, the Kautz digraph $K(d,\ell)$ has the smallest diameter $(D=\ell)$ among all digraphs with $(d+1) d^{\ell-1}$ vertices and degree $d$ (see  Li, Lu and Su~\cite{LiLuSu04}).
Since the diameter of the cyclic Kautz digraphs $CK(d,\ell)$ is greater than the diameter of the Kautz digraphs $K(d,\ell)$, we construct the \emph{modified cyclic Kautz digraphs} $MCK(d,\ell)$ by adding some arcs to $CK(d,\ell)$, in order to obtain the same diameter as $K(d,\ell)$.

In cyclic Kautz digraphs $CK(d,\ell)$, a vertex $a_2\ldots a_{\ell+1}$ is forbidden if $a_2=a_{\ell+1}$. For each such vertex, we replace the first symbol $a_2$ by one of the possible symbols $a_2'$ such that now $a_2'\neq a_3, a_{\ell+1}$ (so $a_2'\ldots a_{\ell+1}$ represents an allowed vertex). Then, we add arcs from vertex $a_1\ldots a_{\ell}$ to vertex $a_2'\ldots a_{\ell+1}$, with $a_1\neq a_{\ell}$ and $a_2'\neq a_3,a_{\ell+1}$. Note that $CK(d,\ell)$ and $MCK(d,\ell)$ have the same vertices, because we only add arcs to $CK(d,\ell)$ to obtain $MCK(d,\ell)$. See a pair of examples of modified cyclic Kautz digraphs in Figure~\ref{fig:donut-modificat}.

As said in the introduction, the Kautz digraphs $K(d,\ell)$ can be defined as iterated line digraphs of the complete symmetric digraphs $K_{d+1}$ (see Fiol, Yebra and Alegre~\cite{FiYeAl84}). This also means that the Kautz digraphs $K(d,\ell)$ can be obtained as the line digraph of $K(d,\ell-1)$. Namely,
\begin{eqnarray*}
  K(d,\ell) &=& L^{\ell-1}(K_{d+1}), \\
  K(d,\ell) &=& L(K(d,\ell-1)),
\end{eqnarray*}
where $L$ is the (1-iterated) line digraph of a digraph, and $L^{t}$ is the $t$-iterated line digraph.

Fiol and Llad\'{o}~\cite{FiLl92} defined the partial line digraph as follows:
Let $E'\subseteq E$ be a subset of arcs which are adjacent to all vertices of $G$, that is, $\{v;(u,v)\in E'\}=V$. A digraph $PL(G)$ is said to be a \emph{partial line digraph} of $G$ if its vertices
represent the arcs of $E'$, that is, $V(PL(G))=\{uv;(u,v)\in E'\}$, and a vertex $uv$ is adjacent to vertices $v'w$, for each $w\in\Gamma_G^+(v)$,
where
$$
v'=\left\{
\begin{array}{ll}
     v & \mbox{if } vw\in V(PL(G)), \label{cas1}\\
     \mbox{any other element of } \Gamma_G^-(w) \mbox{ such that } v'w\in V(PL(G))& \mbox{otherwise}\label{cas2}.
   \end{array}
\right.
$$
See an example of this definition in Figure~\ref{fig:dlp-Kautz}.

\begin{theorem}[\cite{FiLl92}]
\label{diam-PLG}
Let $G$ be a $d$-out-regular digraph $(d>1)$ with order $N$ and diameter $D$. Then, the order $N_{PL}$ and diameter $D_{PL}$ of a partial line digraph $PL(G)$ satisfy
\begin{eqnarray*}
  && N \leq N_{PL}\leq dN,\\
  && D \leq D_{PL}\leq D+1.
\end{eqnarray*}
Moreover, $PL(G)$ is also $d$-out-regular.
\end{theorem}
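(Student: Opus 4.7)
The plan is to verify each of the three assertions in turn, using the explicit definition of $PL(G)$ given just above the theorem statement.

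First I would handle the $d$-out-regularity of $PL(G)$, which is essentially immediate from the construction. A vertex $uv \in V(PL(G))$ has, by definition, exactly one out-neighbor in $PL(G)$ for each $w \in \Gamma_G^+(v)$ (the choice of $v'$ is forced once $w$ is fixed). Since $G$ is $d$-out-regular, $|\Gamma_G^+(v)| = d$, so $uv$ has exactly $d$ out-neighbors. This also prepares the ground for the order bound.

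Next I would address the order inequalities $N \leq N_{PL} \leq dN$. By definition $V(PL(G))$ is in bijection with $E'$, so $N_{PL} = |E'|$. The hypothesis on $E'$ is that $\{v : (u,v) \in E'\} = V(G)$, so each of the $N$ vertices of $G$ is the head of at least one arc in $E'$, giving $|E'| \geq N$. For the upper bound, $E' \subseteq E(G)$ and $|E(G)| = dN$ since $G$ is $d$-out-regular, so $|E'| \leq dN$.

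The diameter bounds are the heart of the statement. For the lower bound $D \leq D_{PL}$, I would consider the projection $\pi : V(PL(G)) \to V(G)$ sending $uv \mapsto v$. Any arc $(uv, v'w)$ in $PL(G)$ corresponds to an arc $(v, w)$ in $G$ by the construction (since $w \in \Gamma_G^+(v)$), so $\pi$ sends directed walks to directed walks of the same length; hence a path of length $k$ between two fixed vertices of $PL(G)$ projects to a walk of length $k$ in $G$ between two fixed vertices, forcing $D_{PL} \geq D$. The subtlety is to observe that given any target vertex $y \in V(G)$, we can choose $uv \in \pi^{-1}(v)$ and $xy \in \pi^{-1}(y)$ so that the distance from $v$ to $y$ in $G$ realises $D$; this is possible because $\pi$ is onto (each vertex of $G$ is the head of some arc in $E'$).

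For the upper bound $D_{PL} \leq D + 1$, I would argue as follows. Given two vertices $uv$ and $xy$ of $PL(G)$, take a shortest directed path $v = w_0, w_1, \ldots, w_k = x$ in $G$, of length $k \leq D$. Since $xy \in V(PL(G))$, in particular $(x,y)$ is an arc of $G$, so we can extend this to a walk $v, w_1, \ldots, w_k, y$ of length $k+1 \leq D+1$ in $G$. I would then lift this walk step by step to a directed path in $PL(G)$ starting at $uv$: at each stage, the defining rule for $PL(G)$ guarantees that from the current vertex (say with second coordinate $w_i$) we can reach a vertex with second coordinate $w_{i+1}$. The main point to check, and this is the step I expect to be the delicate one, is that the \emph{final} vertex reached is exactly $xy$ and not merely some vertex of the form $?\,y$: this follows from the first branch of the definition of $PL(G)$, because $xy \in V(PL(G))$ by hypothesis, so when $w = y$ and the predecessor in the lifted path has second coordinate $x$, we are forced to take $v' = x$. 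Hence the distance from $uv$ to $xy$ in $PL(G)$ is at most $D+1$, completing the proof.
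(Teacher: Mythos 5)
The paper does not prove this statement: it is quoted verbatim from Fiol and Llad\'o \cite{FiLl92} and used as a black box, so there is no in-paper proof to compare your argument against. Judged on its own, your reconstruction is correct and is essentially the standard argument. The out-regularity and order bounds are handled as one would expect ($N_{PL}=|E'|$, squeezed between $N$ and $dN$ by the covering condition and by $E'\subseteq E$), the projection $uv\mapsto v$ correctly maps arcs of $PL(G)$ to arcs of $G$ and yields $D\leq D_{PL}$, and you have correctly isolated the one delicate point in the upper bound: after lifting a shortest $v$--$x$ walk extended by the arc $(x,y)$, the final step lands exactly on $xy$ because $xy\in V(PL(G))$ forces the first branch $v'=x$ of the adjacency rule. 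Two cosmetic remarks: in the out-regularity argument, $v'$ is not literally \emph{forced} by $w$ in the second branch of the definition (several admissible $v'$ may exist, and one is chosen); what matters is that exactly one out-neighbor of the form $v'w$ is assigned per $w\in\Gamma_G^+(v)$, and that such a $v'$ always exists precisely because $E'$ covers every vertex of $G$ as a head --- worth stating explicitly, since it is the same covering hypothesis that drives the lower bound on $N_{PL}$.
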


Note that, according to Fiol and Llad\'{o}~\cite{FiLl92}, $N_{PL}=N$ and $D_{PL}=D$ if and only if $PL(G)$ is $G$.

\begin{theorem}
The modified cyclic Kautz digraph $MCK(d,\ell)$ has the following properties:
\begin{itemize}
  \item[$(a)$] It is $d$-out-regular.
  \item[$(b)$] Its diameter is $D=\ell$, which is the same as the diameter of the Kautz digraph $K(d,\ell)$.
\end{itemize}
\end{theorem}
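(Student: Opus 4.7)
The plan is to identify $MCK(d,\ell)$ with a partial line digraph of the Kautz digraph $K(d,\ell-1)$ and then read off both conclusions from Theorem~\ref{diam-PLG}. Recall that the vertices of $K(d,\ell-1)$ are the length-$(\ell-1)$ Kautz sequences, and its arcs have the form $(a_1\ldots a_{\ell-1},\,a_2\ldots a_\ell)$, one per length-$\ell$ Kautz sequence. Let $E'\subseteq E(K(d,\ell-1))$ consist of those arcs whose associated length-$\ell$ sequence $a_1\ldots a_\ell$ additionally satisfies the cyclic condition $a_1\neq a_\ell$. These arcs are in natural bijection with $V(CK(d,\ell))=V(MCK(d,\ell))$ via $(a_1\ldots a_{\ell-1},\,a_2\ldots a_\ell)\mapsto a_1a_2\ldots a_\ell$.

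I would first verify the hypothesis of the partial line digraph construction, namely that every vertex $w=b_1\ldots b_{\ell-1}$ of $K(d,\ell-1)$ is the head of at least one arc in $E'$. This reduces to finding $b_0$ with $b_0\neq b_1$ and $b_0\neq b_{\ell-1}$, which is always possible because the alphabet has $d+1\geq 3$ symbols. Then I would unfold the Fiol--Llad\'{o} definition of $PL$ applied to $(K(d,\ell-1),E')$ and compare it to the construction of $MCK(d,\ell)$. A vertex of $PL$, corresponding to $(u,v)\in E'$ with $u=a_1\ldots a_{\ell-1}$ and $v=a_2\ldots a_\ell$, represents the vertex $a_1\ldots a_\ell$; for each out-neighbour $w=a_3\ldots a_{\ell+1}$ of $v$ in $K(d,\ell-1)$, the partial line digraph adds one outgoing arc. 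When $a_2\neq a_{\ell+1}$, the arc $(v,w)$ itself lies in $E'$, so the target is $(v,w)$, i.e., the usual $CK$-arc to $a_2\ldots a_{\ell+1}$. When $a_2=a_{\ell+1}$, the target is some $(v',w)$ with $v'=a_2'a_3\ldots a_\ell$ and $(v',w)\in E'$, forcing $a_2'\neq a_3,a_{\ell+1}$, which is precisely the extra arc added in the definition of $MCK(d,\ell)$. Hence $MCK(d,\ell)=PL(K(d,\ell-1))$.

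Once this identification is in place, both conclusions follow from Theorem~\ref{diam-PLG} with $G=K(d,\ell-1)$. Since $K(d,\ell-1)$ is $d$-out-regular, so is its partial line digraph $MCK(d,\ell)$, establishing~$(a)$. Applying the bound $D(PL(G))\leq D(G)+1$ yields $D(MCK(d,\ell))\leq D(K(d,\ell-1))+1=\ell=D(K(d,\ell))$, establishing~$(b)$. The only substantive step is the identification; the main obstacle is the combinatorial bookkeeping needed to confirm that the arcs prescribed by the Fiol--Llad\'{o} $PL$-rule coincide exactly with those added by the $MCK$ construction, together with the verification that $E'$ is admissible. After that, $d$-out-regularity and the diameter bound drop out as immediate corollaries of the Fiol--Llad\'{o} theorem.
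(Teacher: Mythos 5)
Your identification $MCK(d,\ell)=PL(K(d,\ell-1))$ is exactly the paper's route, and your verification is in places more careful than the paper's (you explicitly check that $E'$ is admissible, i.e., that every vertex of $K(d,\ell-1)$ is the head of an arc in $E'$, which the paper takes for granted). Part $(a)$ then follows from Theorem~\ref{diam-PLG} just as you say.

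There is, however, a gap in part $(b)$: the theorem asserts that the diameter \emph{equals} $\ell$, but you only derive the upper bound $D(MCK(d,\ell))\leq D(K(d,\ell-1))+1=\ell$ from Theorem~\ref{diam-PLG}. A priori the bound $D\leq D_{PL}\leq D+1$ leaves open the possibility $D_{PL}=\ell-1$, and since $MCK(d,\ell)$ is obtained from $CK(d,\ell)$ by \emph{adding} arcs, the large diameter of $CK(d,\ell)$ gives no lower bound either. The paper closes this by invoking the Fiol--Llad\'{o} characterization that $N_{PL}=N$ and $D_{PL}=D$ hold simultaneously if and only if $PL(G)=G$; since $PL(K(d,\ell-1))\neq K(d,\ell-1)$ (they do not even have the same number of vertices), the diameter cannot drop to $\ell-1$ and must equal $\ell$. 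You need to add this one-line argument (or some other explicit lower bound showing two vertices of $MCK(d,\ell)$ at distance $\ell$) to complete the proof of $(b)$.
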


\begin{proof}
First, we show that the modified cyclic Kautz digraph $MCK(d,\ell)$ can be obtained as a partial line digraph of Kautz digraph $K(d,\ell-1)$:
$$
MCK(d,\ell)=PL(K(d,\ell-1)).
$$
See a scheme of the process of obtaining $MCK(d,\ell)$ from $K(d,\ell-1)$ in Figure~\ref{fig:dlp-Kautz}.
The vertices of $MCK(d,\ell)$ are the same vertices as the ones of $CK(d,\ell)$, that is, sequences $a_1\ldots a_{\ell}$ with $a_i\neq a_{i+1}$ and $a_1\neq a_\ell$, for $i=1,\ldots,\ell-1$. To obtain a $MCK(d,\ell)$, we add some arcs to $CK(d,\ell)$. As vertex $a_2a_3\ldots a_\ell a_2$ is forbidden, it does not belong to $MCK(d,\ell)$. The added arcs are the following:
$$
a_1a_2a_3\ldots a_\ell \longrightarrow a_2'a_3\ldots a_\ell a_2.
$$

Moreover, the vertices of $K(d,\ell-1)$ are $a_1\ldots a_{\ell-1}$ with $a_i\neq a_{i+1}$ for $i=1,\ldots,\ell-2$, and have arcs between vertices $a_1\ldots a_{\ell-1}$ and $a_2\ldots a_{\ell}$. Then, the arcs of $K(d,\ell-1)$ are $a_1\ldots a_{\ell}$, with $a_i\neq a_{i+1}$ for $i=1,\ldots,\ell-1$.
From these arcs, $E'\subseteq E$ is the subset of arcs that satisfies $a_1\neq a_\ell$. Now we apply the partial digraph technique to $E'$. Replacing the arcs of $E'$ by vertices, we obtain that the vertices of $PL(K(d,\ell-1))$ are $a_1\ldots a_{\ell}$, with $a_i\neq a_{i+1}$ and $a_1\neq a_{\ell}$, for $i=1,\ldots,\ell-1$. According to the definition of partial line digraph, there are two kinds of arcs in $PL(K(d,\ell-1))$. The first kind of arcs goes from vertex $a_1\ldots a_{\ell}$ to vertex $a_2\ldots a_{\ell+1}$, both vertices belonging to the set of vertices of $PL(K(d,\ell-1))$. The second kind of arcs goes from vertex $a_1\ldots a_{\ell}$ to vertex $a_2'a_3\ldots a_{\ell}a_2$, where we replaced the forbidden vertex $a_2a_3\ldots a_{\ell}a_2$ in $PL(K(d,\ell-1))$ for $a_2'a_3\ldots a_{\ell}a_2$, for a value of $a_2'$ such that $a_2'\neq a_2,a_3$.
As we obtain the same vertices and arcs in $MCK(d,\ell)$ as in $PL(K(d,\ell-1))$, they are the same digraph.

\begin{itemize}
  \item[$(a)$] As $K(d,\ell-1)$ is $d$-out-regular (indeed, it is $d$-regular), then by Theorem~\ref{diam-PLG} its partial line digraph $PL(K(d,\ell-1))=MCK(d,\ell)$ is also $d$-out-regular.
  \item[$(b)$] As the diameter of $K(d,\ell-1)$ is $D=\ell-1$, then by Theorem~\ref{diam-PLG} the diameter of its partial line digraph $PL(K(d,\ell-1))=MCK(d,\ell)$ is $\ell$. The diameter of $PL(K(d,\ell-1))$ cannot be $\ell-1$, because $PL(K(d,\ell-1))\neq K(d,\ell-1)$. Then, the diameter of $MCK(d,\ell)$ is $\ell$, which is the same as the diameter of $K(d,\ell)$.
\end{itemize}
\end{proof}


\begin{figure}[t]
    \begin{center}
  \includegraphics[width=14cm]{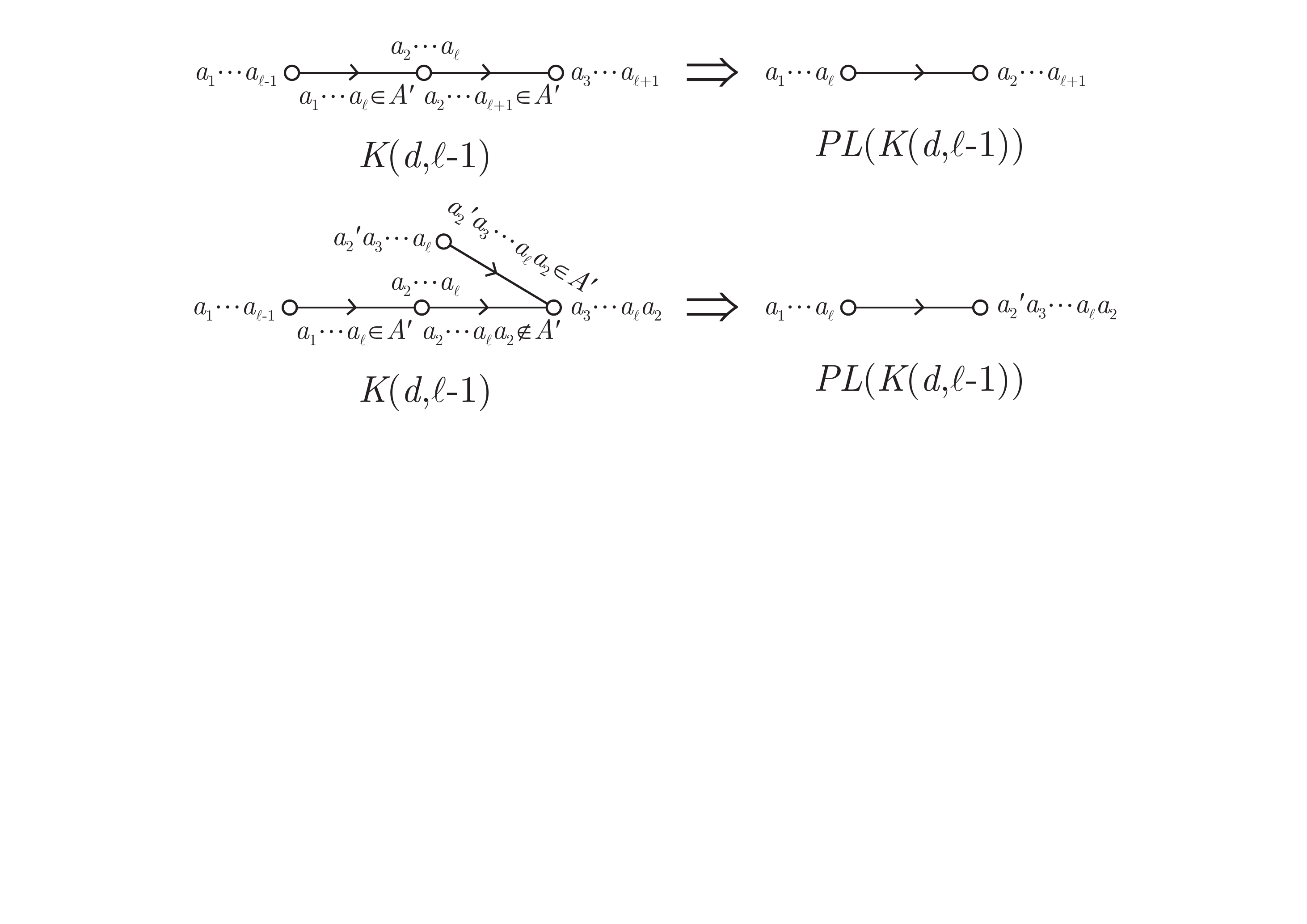}
  \end{center}
  \vskip-6cm
  \caption{Scheme of the two kinds of situation in the partial line digraph of $K(d,\ell-1)$.}\label{fig:dlp-Kautz}
\end{figure}

\section{Line digraphs iterations of the cyclic Kautz digraphs}
\label{sec:digrafs-linia}

As it was done with the Kautz digraphs $K(d,\ell)$, which are regular as said before, here we compute the number of vertices of the $t$-iterated line digraphs of the cyclic Kautz digraphs $CK(d,\ell)$, which are non-regular digraphs. In contrast with the regular digraphs, the resolution of the non-regular case is not immediate. 

As said in the Introduction, the diameter of a line digraph $L(G)$ of a digraph $G$ is $D(L(G))=D(G)+1$, even if $G$ is a non-regular digraph with the exception of directed cycles (see Fiol, Yebra and Alegre~\cite{FiYeAl84}). Then, with the line digraph technique, we obtain digraphs with minimal diameter and maximum connectivity from a certain iteration. For this reason, we calculate the number of vertices of digraphs obtained with this technique. In Figure~\ref{fig:CK(2,4)+L} there is an example of a $CK(d,\ell)$ and its line digraph. 

%
%



\begin{figure}[t]
\vskip-.25cm
    \begin{center}
  \includegraphics[width=10cm]{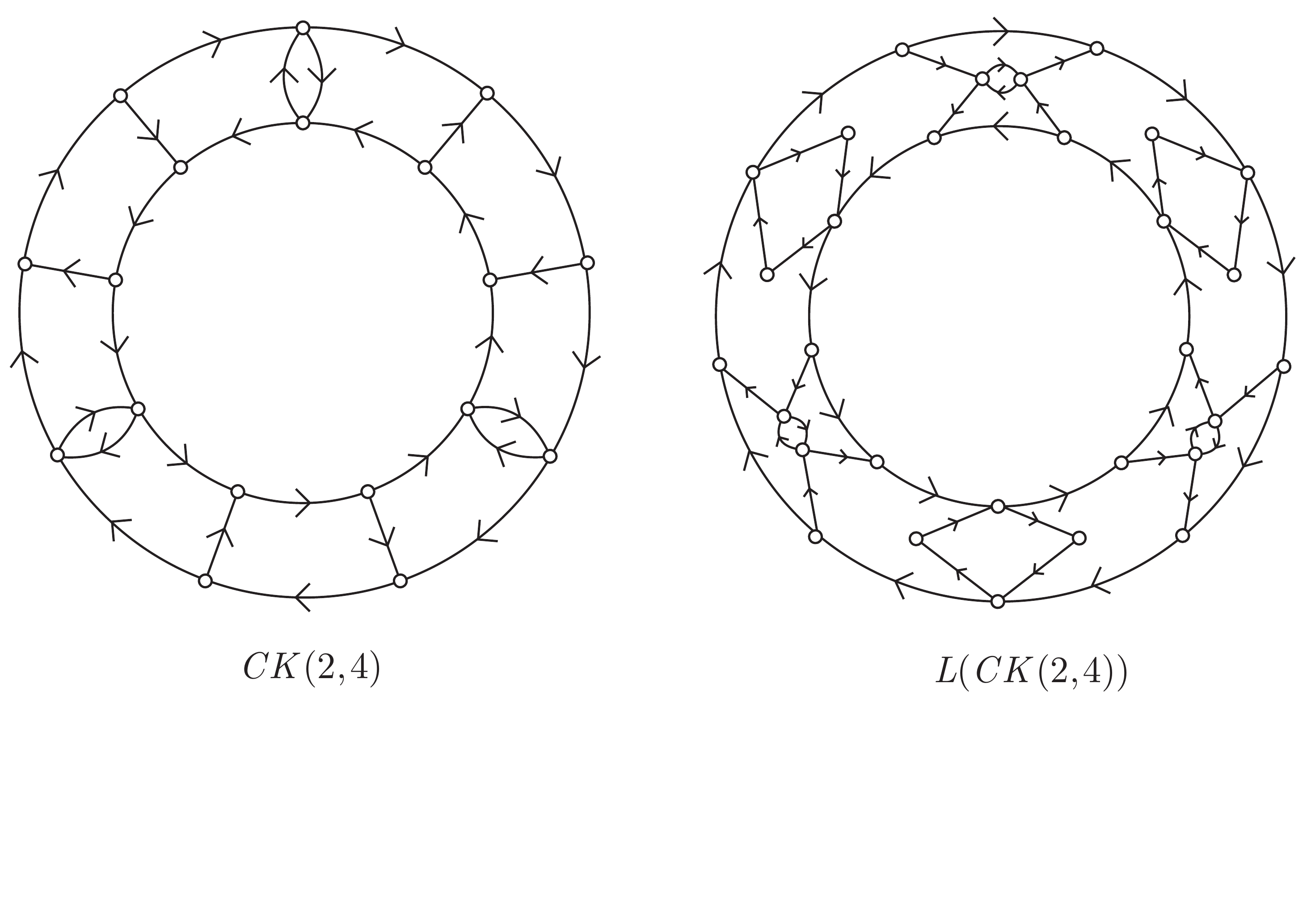}
  \end{center}
  \vskip-2.25cm
  \caption{$CK(2,4)$ and its line digraph at iteration $t=1$.}\label{fig:CK(2,4)+L}
\end{figure}

\begin{theorem}
\label{theo:num-v-line-dig}
Let $\ell\geq 3$ and $d\geq 1$ be integers. Then the number of vertices of the $t$-iterated line digraph $L^t(CK(d,\ell))$ of $CK(d,\ell)$, for $1\leq t\leq \ell-2$, is
$$
(d^2-d+1)^t d^{\ell-t}+\frac{1}{2}(-1)^{\ell+1}(d-2)^t(d-1)d +\frac{1}{2}(-1)^{\ell} d^{t+1}(d+1).
$$
\end{theorem}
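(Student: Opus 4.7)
The plan is to count $|V(L^t(CK(d,\ell)))|$ by identifying it with the number of walks of length $t$ in $CK(d,\ell)$, which in turn correspond to sequences $a_1 a_2 \cdots a_{\ell+t}$ over $\Sigma=\{0,1,\dots,d\}$ such that consecutive symbols differ and $a_i \neq a_{i+\ell-1}$ for $i=1,\dots,t+1$. The key structural feature, valid precisely for $t\leq \ell-2$, is that the two index sets $\{1,\dots,t+1\}$ and $\{\ell,\dots,\ell+t\}$ touched by the chord conditions are disjoint. I would therefore decompose each such sequence into (i) the list of chord pairs $p_i=(a_i,a_{i+\ell-1})$ for $i=1,\dots,t+1$, and (ii) the \emph{middle walk} $a_{t+1}a_{t+2}\cdots a_{\ell-1}a_\ell$, a walk of length $\ell-t-1$ in the complete symmetric digraph $K_{d+1}$. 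The sequence $p_1,\dots,p_{t+1}$ traces out a walk in the auxiliary digraph $H$ on the vertex set $\{(x,y)\in\Sigma^2:x\neq y\}$ with arcs $(x,y)\to(x',y')$ iff $x\neq x'$, $y\neq y'$ and $x'\neq y'$. This gives
\[
|V(L^t(CK(d,\ell)))| = \sum_{x,y\in\Sigma} R_t(y,x)\,W_{\ell-t-1}(x,y),
\]
where $R_t(y,x)$ counts walks of length $t$ in $H$ whose initial pair has second coordinate $y$ and whose terminal pair has first coordinate $x$, and $W_k(x,y)$ is the number of walks of length $k$ from $x$ to $y$ in $K_{d+1}$.

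Next I would determine the spectrum of $H$. A direct count shows every vertex has out-degree $d^2-d+1$, so $\mathbf{1}$ is an eigenvector with eigenvalue $d^2-d+1$. Testing vectors of the form $f_\alpha^\pm(x,y)=\alpha(x)\pm\alpha(y)$ for $\alpha\colon\Sigma\to\mathbb{R}$ with $\sum_{z\in\Sigma}\alpha(z)=0$ produces two further eigenspaces of dimension $d$ each: the symmetric case $f_\alpha^+$ gives eigenvalue $-(d-2)$, the antisymmetric case $f_\alpha^-$ gives eigenvalue $-d$, and a short trace computation assigns eigenvalue $1$ to the remaining $d^2-d-1$ dimensions. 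The crucial observation is that the indicator vectors $v_x(x',y')=[x'=x]$ and $u_y(x',y')=[y'=y]$ appearing in the sum depend on only one coordinate, so both lie in the $(2d+1)$-dimensional span of $\mathbf{1}$, $f_\alpha^+$ and $f_\alpha^-$; consequently the $\lambda=1$ eigenspace contributes nothing to $R_t(y,x)$. This yields closed forms for $R_t(y,x)$ in the two cases $x=y$ and $x\neq y$, built from only the three bases $(d^2-d+1)^t$, $(-(d-2))^t$ and $(-d)^t$.

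Finally, I would insert the classical formula
\[
W_k(x,y) = \frac{d^k-(-1)^k}{d+1} + (-1)^k[x=y],
\]
coming from the eigenvalues $d$ and $-1$ of $K_{d+1}$, and collect like terms. The $(d^2-d+1)^t$ contribution combines with the $d^{\ell-t-1}/(d+1)$ part of $W$ to produce the main term $d^{\ell-t}(d^2-d+1)^t$, while the $(-(d-2))^t$ and $(-d)^t$ contributions pick up a $(-1)^{\ell-t-1}$ factor from the $[x=y]$ part of $W$, and using $(-1)^{\ell-t-1}(-1)^t=(-1)^{\ell-1}$ collapse precisely into the two $(-1)^\ell$ summands stated in the theorem. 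The main obstacle is the spectral analysis of $H$ — in particular, checking that $v_x$ and $u_y$ lie outside the $\lambda=1$ eigenspace, which is exactly what keeps the final answer a clean three-term expression; once that is in place the remaining work is an algebraic cleanup.
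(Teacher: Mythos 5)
Your proposal is correct, and it takes a genuinely different route from the paper's. The paper counts the sequences $a_1\ldots a_{\ell+t}$ in two combinatorial stages: first it introduces the classes $\mathcal{B}_{2r},\mathcal{C}_{2r},\mathcal{D}_{2r}$ of even-length sequences whose chords span exactly half the sequence, derives a $3\times 3$ linear recursion by a character-insertion argument and solves it by diagonalization (Lemma~\ref{lemma:main}), and then stretches the middle of the sequence via a second insertion recursion $E_j=(d-1)E_{j-1}+dE_{j-2}$ (Lemma~\ref{lem:recursion2}). Your transfer-matrix decomposition --- a walk of length $t$ in the auxiliary graph $H$ on ordered pairs of distinct symbols, glued to a walk of length $\ell-t-1$ in $K_{d+1}$ --- reaches the same count in one step, and it explains the shape of the formula: the bases $(d^2-d+1)^t$, $(-(d-2))^t$, $(-d)^t$ are precisely the eigenvalues of $H$ visible to the vectors $v_x,u_y$ (and, not coincidentally, the eigenvalues of the paper's $3\times3$ matrix $A$), while $d$ and $-1$ are the eigenvalues of $K_{d+1}$ (the characteristic roots of the paper's $E_j$ recursion). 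I checked your eigenvalue computations for $H$ and the decomposition of the indicator vectors into $\mathbf{1}$ and $f^{\pm}_\alpha$; both are correct, and the identity $\sum_{x,y}R_t(y,x)W_{\ell-t-1}(x,y)$ does reproduce the stated formula (e.g.\ it gives $30$ for $d=2,\ell=4,t=1$, matching Proposition~\ref{prop:num-arcs}). Two points to make explicit in a full write-up: a trace computation shows only that the remaining $d^2-d-1$ eigenvalues sum to $d^2-d-1$, not that each equals $1$ --- but you never need that claim, since the $H$-invariance of the span of $\mathbf{1}$ and the $f^{\pm}_\alpha$ already confines $u_y^\top H^t v_x$ to the three relevant eigenvalues; and the eigenvector families degenerate for $d=1$ (where $f^{+}_\alpha$ vanishes identically on off-diagonal pairs), so the case $d=1$ should be verified directly, exactly as the paper does before its own count.
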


Recall that a vertex of $CK(d,\ell)$ is a sequence  of $\ell$ characters from an alphabet of $d+1$ symbols, such that consecutive symbols, and also the first and last symbol, are different. Two vertices of $CK(d,\ell)$ are adjacent when they have the form  $a_1 a_2\ldots a_\ell$ and $a_2 \ldots a_\ell a_{\ell+1}$  (with $a_1\neq a_\ell$ and $a_2\neq a_{\ell+1}$). This suggests to represent an arc of $CK(d,\ell)$ as a sequence of $\ell+1$ characters  $a_1 a_2\ldots a_\ell a_{\ell+1}$ satisfying $a_i\neq a_{i+1}$ for $1\leq i \leq \ell$,  $a_1\neq a_\ell$, and $a_2\neq a_{\ell+1}$. Note that $a_1$ can be equal to $a_{\ell+1}.$ See Figure~\ref{fig:verticesIteradedLineGraph} for an example. The arcs of $CK(d,\ell)$ are the vertices of the iterated line digraph of $CK(d,\ell)$ at the first iteration $t=1.$ Two such vertices are adjacent when they have the form $a_1 a_2\ldots a_\ell a_{\ell+1}$ and $a_2 a_3\ldots a_{\ell+1} a_{\ell+2}$, with  $a_1\neq a_\ell$, $a_2\neq a_{\ell+1}$ and $a_3\neq a_{\ell+2}$. Therefore, a vertex of the iterated line digraph of a cyclic Kautz digraph $CK(d,\ell)$ at iteration $t=2$ can be represented by a sequence of $\ell+2$ characters satisfying $a_i\neq a_{i+1}$, $a_1\neq a_\ell$, $a_2\neq a_{\ell+1}$ and $a_3\neq a_{\ell+2}$.  In general, for $0\leq t\leq \ell-2$, the vertices of the iterated line digraph of $CK(d,\ell)$ at iteration $t$
are represented by sequences $a_1 a_2\ldots a_{\ell+t}$ satisfying $a_i \neq a_{i+1}$ for $1\leq i \leq \ell+t-1$ and
$a_i \neq a_{i+\ell-1}$ for $1\leq i\leq t+1$. We denote the set of these sequences by $\mathcal{S}$ for any $t$ and $\ell$. See Figure~\ref{fig:sequencesS}.

For the case $d=1$, $CK(d,\ell)$ has two vertices if $\ell$ is even, namely the two sequences of alternating characters, and  $CK(d,\ell)$ has no vertices if $\ell$ is odd. It is easy to verify that this also holds for the set $\mathcal{S}$.

Let us now prove the case $d\geq 2$.
To count the number of elements of $\mathcal{S}$, we first only count sequences of the form $a_1 a_2\ldots a_{\ell}$ for $\ell$ even, with $a_i \neq a_{i+1}$ for $1\leq i \leq \ell-1$ and $a_i \neq a_{i+\ell/2}$ for $1\leq i \leq \ell/2$. We denote this set of sequences of even length by $\mathcal{S'} \subset \mathcal{S}$.

We partition $\mathcal{S'}$ into two classes $\mathcal{C}_\ell$ and $\mathcal{D}_\ell$, where $\mathcal{C}_\ell$ is the set of those sequences of $\mathcal{S'}$ that have $a_{\ell/2+1}=a_\ell$, and $\mathcal{D}_\ell$ is the set of the remaining ones. We also introduce an auxiliary class of sequences $\mathcal{B}_\ell$, which is defined as $\mathcal{D}_\ell$ with the further restriction that $a_{\ell/2+1}=a_{\ell/2}$; hence, the elements of $\mathcal{B}_\ell$ are not sequences of $\mathcal{S}$. See Figure~\ref{fig:classesBCD}.
We denote the cardinalities of $\mathcal{C}_\ell, \mathcal{D}_\ell$ and $\mathcal{B}_\ell$ with $C_\ell, D_\ell$ and $B_\ell$, respectively.

For the first values of $\ell$, it is easy to calculate $B_4=(d+1)d^2$, $C_4=0$, and $D_4=(d+1)d(d-1)^2$ and
$$B_6=(d+1)d(d-1)^3$$
$$C_6=(d+1)d(d^3-2d^2+3d-1)$$
$$D_6=(d+1)d(d-1)^2(d^2-2d+3).$$
For $\ell>6$, we generate all sequences of $\mathcal{B}_\ell$, $\mathcal{C}_\ell$, and $\mathcal{D}_\ell$ from all sequences of $\mathcal{B}_{\ell-2}$, $\mathcal{C}_{\ell-2}$, and $\mathcal{D}_{\ell-2}.$ This is done by inserting a new character between $a_{\ell/2}$ and $a_{\ell/2+1}$, and another new character after $a_\ell.$ See Figure~\ref{fig:insertBCD}.
Let us now describe how to generate sequences from the class $\mathcal{B}_\ell$ using only the classes $\mathcal{C}_{\ell-2}$ and $\mathcal{D}_{\ell-2}$. There is only one possibility to insert a new character between $a_{\ell/2}$ and $a_{\ell/2+1}$, that is, $a_{new}=a_{\ell/2+1}.$ The other new character $a_{\ell+2}$ has to be different from $a_\ell$ and from $a_{new}=a_{\ell/2+1}.$ If we start with a sequence from $\mathcal{D}_{\ell-2}$, then $a_{new}\neq a_\ell$, and there are $d-1$ possible ways to insert character $a_{\ell+2}.$ If we start with a sequence from $\mathcal{C}_{\ell-2}$, then $a_{new} = a_\ell$, and there are $d$ possible ways to insert character $a_{\ell+1}.$ We therefore obtain
$$B_\ell=(d-1)D_{\ell-2}+d\,C_{\ell-2}.$$
Similar arguments show
$$C_\ell=(d-1)D_{\ell-2}+d\,B_{\ell-2},$$
$$D_\ell=(d^2-3d+3)D_{\ell-2}+(d-1)^2C_{\ell-2}+(d-1)^2B_{\ell-2}.$$
Note that every sequence is generated exactly once.

\begin{figure}[t]
  \begin{center}
  \includegraphics[scale=0.8]{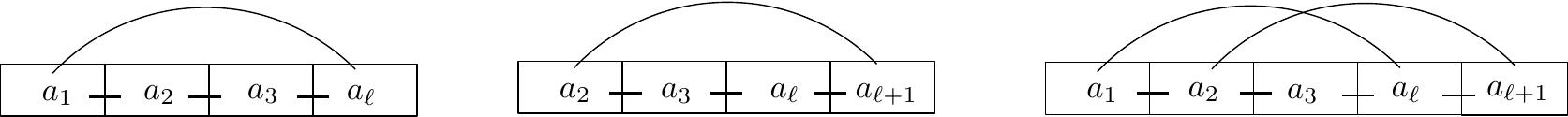}
  \end{center}
  \vskip-.5cm
  \caption{Sequences representing two vertices of $CK(d,4)$ and the arc between them. The lines drawn between characters indicate that these characters must be different.}\label{fig:verticesIteradedLineGraph}
\end{figure}

\vskip.5cm

\begin{figure}[t]
  \begin{center}
  \includegraphics[scale=0.8]{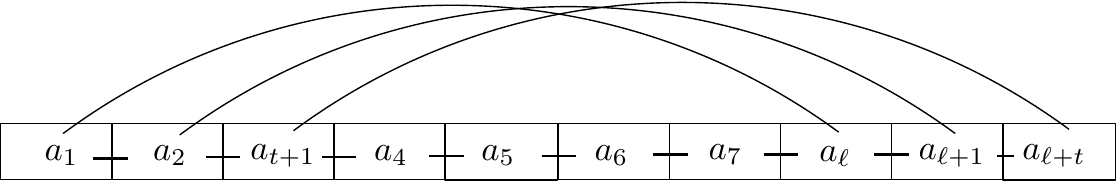}
  \end{center}
  \vskip-.5cm
  \caption{A sequence of $\ell+t$ characters that represents a vertex of the iterated line digraph of a cyclic Kautz digraph $CK(d,\ell=8)$ at iteration $t=2.$}\label{fig:sequencesS}
\end{figure}

\begin{lemma}\label{lemma:main}
The system
\begin{eqnarray*}
  B_\ell &=& (d-1)D_{\ell-2}+d\,C_{\ell-2} \\
  C_\ell &=& (d-1)D_{\ell-2}+d\,B_{\ell-2} \\
  D_\ell &=& (d^2-3d+3)D_{\ell-2}+(d-1)^2C_{\ell-2}+(d-1)^2B_{\ell-2}
\end{eqnarray*}
with initial values
\begin{eqnarray*}
  B_6 &=& (d+1)d(d-1)^3 \\
  C_6 &=& (d+1)d(d^3-2d^2+3d-1) \\
  D_6 &=& (d+1)d(d-1)^2(d^2-2d+3)
\end{eqnarray*}
has solution
\begin{eqnarray*}
B_\ell &=& d(d^2-d+1)^{\ell/2-1}+\frac{1}{2}(-1)^{\ell/2-1}d(d-1)(d-2)^{\ell/2-1}+\frac{1}{2}(-1)^{\ell/2}(d+1)d^{\ell/2},\\
C_\ell &=& \frac{1}{2}(-1)^{\ell/2-1}(d-1)d(d-2)^{\ell/2-1}+d(d^2-d+1)^{\ell/2-1}-\frac{1}{2}(-1)^{\ell/2}d^{\ell/2}(d+1), \\
D_\ell &=& (d-1)d\left((d^2-d+1)^{\ell/2 -1}-(-1)^{\ell/2-1}(d-2)^{\ell/2-1}\right).
\end{eqnarray*}
\end{lemma}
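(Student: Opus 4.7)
The plan is to rewrite the system in matrix form and exploit a $B$–$C$ symmetry to reduce it to a first-order scalar recurrence together with a $2\times 2$ linear system whose characteristic polynomial factors cleanly. Setting $k=\ell/2$ and $v_k=(B_{2k},C_{2k},D_{2k})^\top$, the three recurrences become $v_k=Mv_{k-1}$ with
$$M=\begin{pmatrix} 0 & d & d-1 \\ d & 0 & d-1 \\ (d-1)^2 & (d-1)^2 & d^2-3d+3 \end{pmatrix}.$$
The critical structural observation driving the proof is that the first two rows of $M$ are symmetric in $B$ and $C$.

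First, I would subtract the $B$ and $C$ recurrences to obtain the first-order recurrence $B_{2k}-C_{2k}=-d\,(B_{2(k-1)}-C_{2(k-1)})$. Using the base value $B_4-C_4=(d+1)d^2-0=(d+1)d^2$, this solves to $B_{2k}-C_{2k}=(-1)^{k}(d+1)d^k$, which accounts precisely for the $\tfrac12(-1)^{k}(d+1)d^{k}$ summands appearing in the stated formulas for $B_\ell$ and $C_\ell$.

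Second, introducing $S_k=B_{2k}+C_{2k}$ and adding the $B$ and $C$ recurrences, together with the $D$ recurrence, yields the $2\times 2$ system
$$\begin{pmatrix} S_k \\ D_{2k} \end{pmatrix} = \begin{pmatrix} d & 2(d-1) \\ (d-1)^2 & d^2-3d+3 \end{pmatrix}\begin{pmatrix} S_{k-1} \\ D_{2(k-1)} \end{pmatrix}.$$
A short computation shows that the characteristic polynomial of this matrix is $x^{2}-(d^2-2d+3)x+(d-2)(d^2-d+1)$, which factors as $(x-(d^2-d+1))(x-(2-d))$. Hence, both $S_k$ and $D_{2k}$ are linear combinations of $(d^2-d+1)^{k-1}$ and $(2-d)^{k-1}$. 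The four resulting constants are pinned down by matching $S_2,D_4$ (from $\ell=4$) and $S_3,D_6$ (from the stated initial values), giving $S_k=2d(d^2-d+1)^{k-1}+d(d-1)(2-d)^{k-1}$ and $D_{2k}=d(d-1)(d^2-d+1)^{k-1}-d(d-1)(2-d)^{k-1}$.

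Finally, recovering $B_{2k}=\tfrac12(S_k+(B_{2k}-C_{2k}))$ and $C_{2k}=\tfrac12(S_k-(B_{2k}-C_{2k}))$, and rewriting $(2-d)^{k-1}=(-1)^{k-1}(d-2)^{k-1}$, yields the claimed closed forms; for $D_{2k}$ the formula is already in the desired shape after the same sign rewrite. The main obstacle is bookkeeping rather than conceptual: one must carry out the factorization of the $2\times 2$ characteristic polynomial correctly, solve the two $2\times 2$ linear systems for the coefficients, and be meticulous with signs when switching between the natural eigenvalue $2-d$ and the $(-1)^{k-1}(d-2)^{k-1}$ written in the statement.
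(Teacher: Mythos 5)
Your proof is correct and arrives at the stated closed forms, but it organizes the linear algebra differently from the paper. The paper writes the same $3\times 3$ matrix recurrence $X_\ell=A\,X_{\ell-2}$ and diagonalizes $A$ outright, exhibiting the eigenvalues $-d$, $2-d$, $d^2-d+1$ together with an explicit eigenvector matrix $P$ and computing $X_\ell=P\,D^{\ell/2-3}P^{-1}X_6$. You instead exploit the $B$--$C$ symmetry of the first two equations to block-diagonalize by hand: the difference $B_{2k}-C_{2k}$ satisfies a first-order recurrence with ratio $-d$, which isolates the eigenvalue $-d$ and exactly produces the $\pm\frac{1}{2}(-1)^{k}(d+1)d^{k}$ terms, while $(B_{2k}+C_{2k},\,D_{2k})$ satisfies a $2\times 2$ system with eigenvalues $2-d$ and $d^2-d+1$. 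This buys an argument that can be checked line by line without computing a $3\times 3$ inverse, at the cost of fitting four constants to initial data; the paper's route is more mechanical but requires verifying $P$ and $P^{-1}$. I checked that your expressions for $S_k$ and $D_{2k}$, combined with $B_{2k}-C_{2k}=(-1)^k(d+1)d^k$ and the rewrite $(2-d)^{k-1}=(-1)^{k-1}(d-2)^{k-1}$, do reproduce the lemma's formulas. Two minor points. First, the constant term of your characteristic polynomial has the wrong sign: the determinant of your $2\times 2$ matrix is $(2-d)(d^2-d+1)$, not $(d-2)(d^2-d+1)$; the factorization and the roots $2-d$, $d^2-d+1$ that you actually use downstream are correct, so this is only a typo. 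Second, you anchor the constants at $\ell=4$ using $B_4=(d+1)d^2$, $C_4=0$, $D_4=(d+1)d(d-1)^2$; these appear in the paper's text rather than in the lemma statement, but they are consistent with the $\ell=6$ initial values under the recurrence (one could equivalently use $\ell=6$ and $\ell=8$), so the fit is legitimate.
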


We defer the proof of this lemma to the Appendix.


\begin{figure}[t]
  \begin{center}
  \includegraphics[scale=0.8]{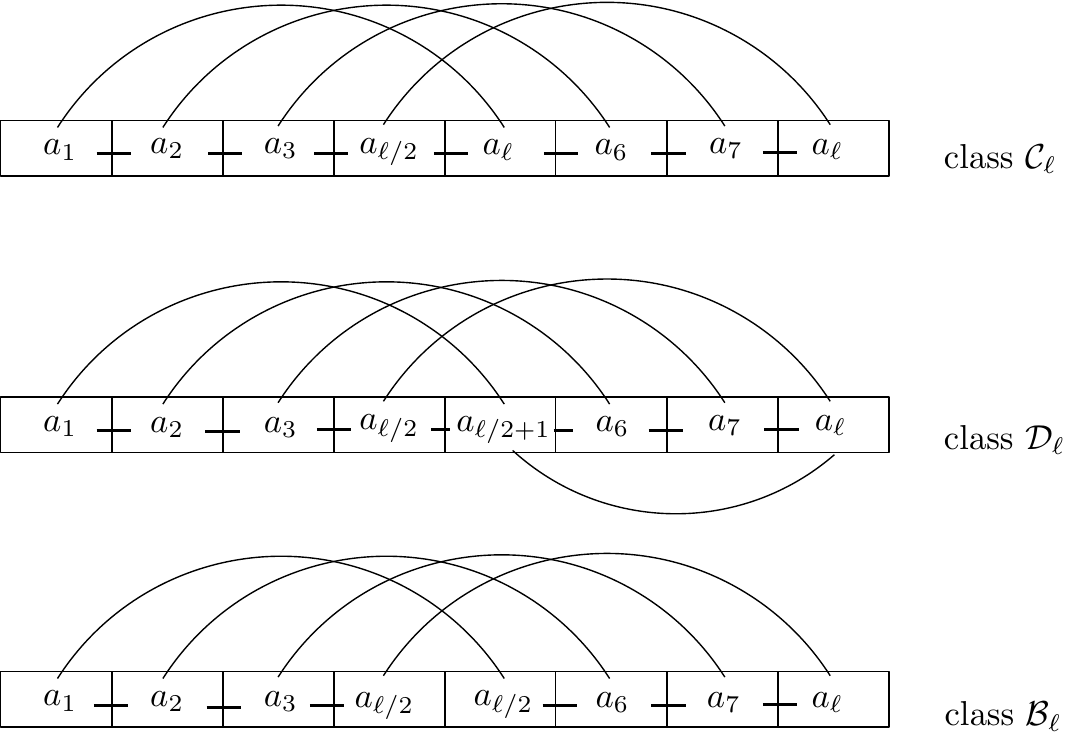}
  \end{center}
  \vskip-.5cm
  \caption{The classes $\mathcal{B}_\ell, \mathcal{C}_\ell,$ and $\mathcal{D}_\ell$.}\label{fig:classesBCD}
\end{figure}

\vskip.5cm

\begin{figure}[t]
  \begin{center}
  \includegraphics[scale=0.8]{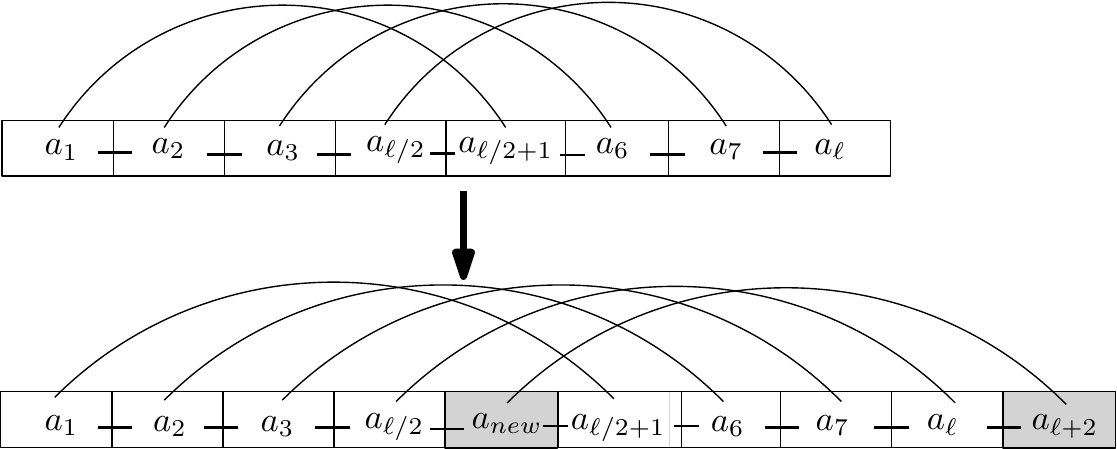}
  \end{center}
  \vskip-.5cm
  \caption{A sequence of length $\ell+2$ is obtained from one of length $\ell$ by inserting a new character between $a_{\ell/2}$ and  $a_{\ell/2+1}$ and another one, $a_{\ell+2}$, after $a_\ell$.}\label{fig:insertBCD}
\end{figure}

\begin{remark}
From Lemma~\ref{lemma:main} we immediately obtain the number of vertices of the $t=(\ell-2)$-iteration of the line digraph of a cyclic Kautz digraph $CK(d,\ell)$ for $d\geq 2, \ell \geq 3$, which is $C_{2\ell-2}+D_{2\ell-2}.$
\end{remark}


We now count the number of sequences from the set $\mathcal{S}$ in general.
Recall that an element of $\mathcal{S}$ has the form $a_1 a_2\ldots a_{\ell+t}$ satisfying $a_i \neq a_{i+1}$ for $1\leq i \leq \ell+t-1$ and
$a_i \neq a_{i+\ell-1}$ for $1\leq i\leq t+1.$ In the following, we set $r=t+1$ (we assume $r$ and $d$ are fixed integers) and represent the set of sequences $\mathcal{S}$ by $\mathcal{E}_j$, where $j=0,1,2,\ldots.$ An element of $\mathcal{E}_j$ has the form $a_1 \ldots a_{2r+j}$ satisfying $a_i \neq a_{i+1}$ for $1\leq i \leq 2r+j-1$ and
$a_i \neq a_{i+r+j}$ for $1\leq i\leq r.$ $E_j$ is the cardinality of  $\mathcal{E}_j$.
Observe that $E_0=C_{2r}+D_{2r}.$
To determine $\mathcal{E}_1$ we insert a new character between characters $a_r$ and $a_{r+1}$ in each sequence of $\mathcal{B}_{2r}, \mathcal{C}_{2r}$ and $\mathcal{D}_{2r}.$  This gives
$$E_1=d\,B_{2r} + (d-1)(C_{2r}+D_{2r}).$$
For $j>1,$ $\mathcal{E}_j$ is obtained from $\mathcal{E}_{j-1}$ and $\mathcal{E}_{j-2}.$
In each sequence of the set $\mathcal{E}_{j-1}$, we insert a new character between the characters $a_r$ and $a_{r+1}$, which can be done in $d-1$ ways.
In each sequence of the set $\mathcal{E}_{j-2}$, we first duplicate the character $a_r$, and then insert a new character between these two characters $a_r.$ This can be done in $d-1$ ways. Figure~\ref{fig:generateEj} depicts the insertion procedure.

\begin{figure}[t]
\vskip-3.5cm
  \begin{center}
  \includegraphics[scale=0.8]{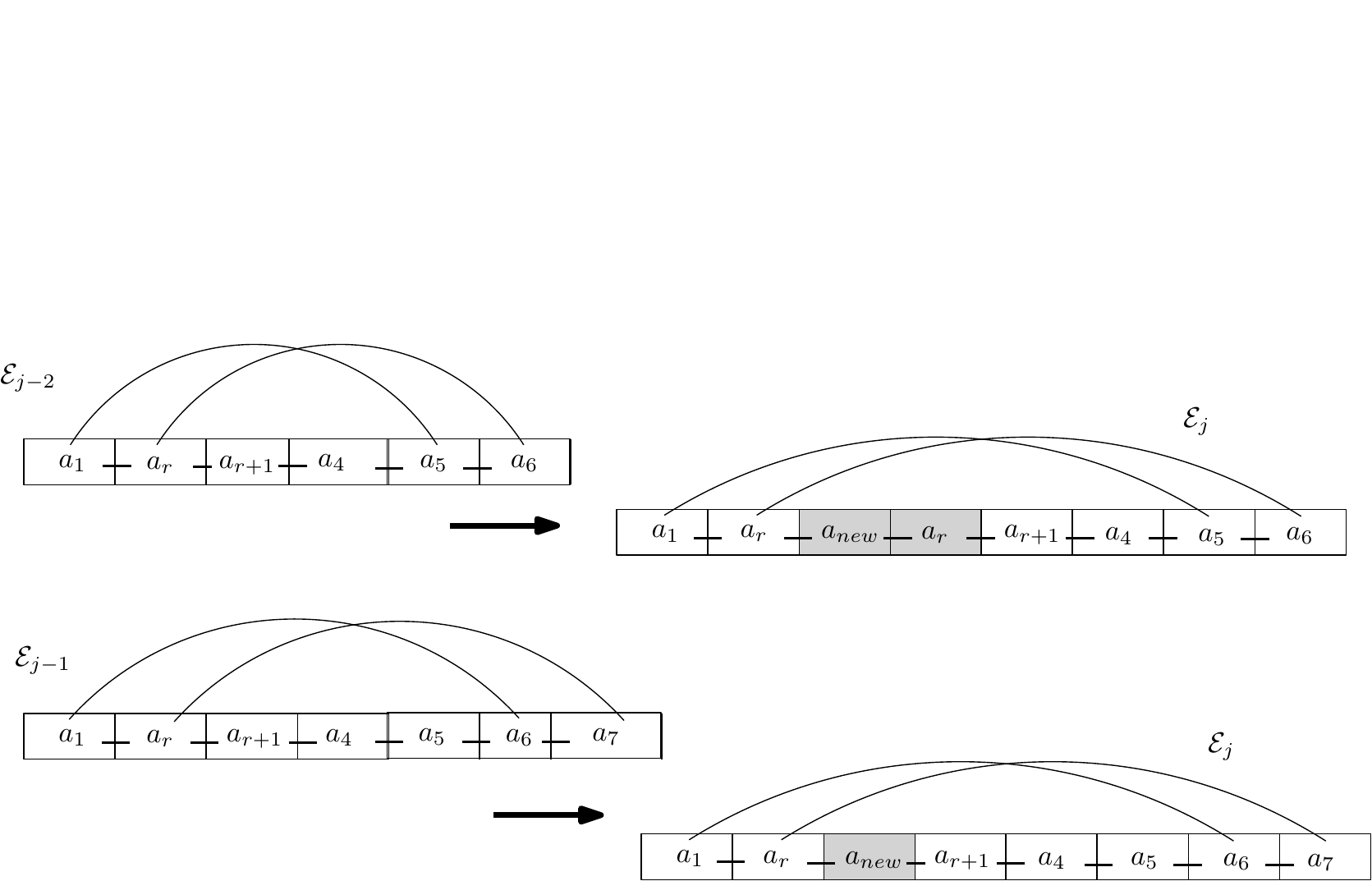}
  \end{center}
  \vskip-.5cm
  \caption{The sequences of $\mathcal{E}_j$ are obtained from the sequences of $\mathcal{E}_{j-1}$ and $\mathcal{E}_{j-2}.$ Here, $r=2$ and $j=4.$}\label{fig:generateEj}
\end{figure}

Note that each sequence of $\mathcal{E}_j$ is generated exactly once.
Thus, we only need to solve the recursion given in the following result.
\begin{lemma}\label{lem:recursion2}
For fixed integers $r$ and $d$, the recursion
$$E_j=(d-1)E_{j-1} + d\,E_{j-2}$$
with initial values $E_0=C_{2r}+D_{2r}$ and $E_1=d\,B_{2r} + (d-1)(C_{2r}+D_{2r})$
has solution
$$E_j= (-1)^j(B_{2r}+C_{2r}+D_{2r})\frac{1-(-d)^{j+1}}{d+1} - B_{2r}(-1)^j.$$
\end{lemma}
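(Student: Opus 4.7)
The plan is to solve the recurrence $E_j = (d-1)E_{j-1} + d\,E_{j-2}$ as a standard second-order linear recurrence with constant coefficients, and then verify that the closed form obtained coincides with the expression in the statement.

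First I would write down the characteristic polynomial $x^2 - (d-1)x - d = 0$ and observe that it factors as $(x-d)(x+1) = 0$, giving roots $x_1 = d$ and $x_2 = -1$. Since these are distinct (we are assuming $d\geq 2$), the general solution has the form
$$E_j = \alpha\, d^j + \beta\,(-1)^j$$
for constants $\alpha,\beta$ depending on the initial values.

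Next I would determine $\alpha$ and $\beta$ by plugging in $j=0$ and $j=1$, yielding the $2\times 2$ linear system
\begin{align*}
\alpha + \beta &= C_{2r} + D_{2r},\\
\alpha d - \beta &= d\,B_{2r} + (d-1)(C_{2r} + D_{2r}).
\end{align*}
Adding these eliminates $\beta$ and gives $\alpha(d+1) = d(B_{2r} + C_{2r} + D_{2r})$, so
$$\alpha = \frac{d(B_{2r} + C_{2r} + D_{2r})}{d+1}, \qquad \beta = \frac{C_{2r} + D_{2r} - d\,B_{2r}}{d+1}.$$

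Finally, I would show that the resulting expression $E_j = \alpha\, d^j + \beta\,(-1)^j$ equals the claimed closed form. The key identity is
$$(-1)^j\,\frac{1-(-d)^{j+1}}{d+1} = \frac{(-1)^j + d^{j+1}}{d+1} = \frac{d}{d+1}\,d^j + \frac{(-1)^j}{d+1},$$
so multiplying by $B_{2r}+C_{2r}+D_{2r}$ and subtracting $B_{2r}(-1)^j$ reproduces exactly the linear combination $\alpha\, d^j + \beta\,(-1)^j$ found above. The only minor obstacle is this last piece of bookkeeping: one must keep track of $(-1)^j$ versus $(-1)^{j+1}$ and of the factor $d+1$ in the denominator, but this is routine algebra rather than a genuine difficulty.
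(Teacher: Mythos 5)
Your proof is correct, but it takes a genuinely different route from the paper. You solve the recurrence by the standard characteristic-polynomial method: the polynomial $x^2-(d-1)x-d$ factors as $(x-d)(x+1)$, the general solution is $\alpha d^j+\beta(-1)^j$, and the constants are fixed by the two initial values; your computation of $\alpha=\frac{d(B_{2r}+C_{2r}+D_{2r})}{d+1}$ and $\beta=\frac{C_{2r}+D_{2r}-d\,B_{2r}}{d+1}$ checks out, and the final identity $(-1)^j\frac{1-(-d)^{j+1}}{d+1}=\frac{d^{j+1}+(-1)^j}{d+1}$ does reconcile your closed form with the one in the statement. The paper instead first proves by induction the auxiliary claim $E_i+E_{i-1}=d^i(B_{2r}+C_{2r}+D_{2r})$, which reduces the second-order recurrence to a first-order one, and then unwinds it into an alternating geometric series that sums directly to the stated expression. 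Your approach is the more systematic and arguably more transparent one, since it makes the two modes $d^j$ and $(-1)^j$ explicit from the start; the paper's telescoping argument avoids solving a linear system and lands on the stated form without the final change-of-basis bookkeeping. One cosmetic point: the roots $d$ and $-1$ are distinct for every $d\geq 1$, not only for $d\geq 2$, so your parenthetical restriction is unnecessary (though harmless, as the lemma is only invoked for $d\geq 2$ anyway).
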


We defer the proof of this lemma to the Appendix.\\

We now use $B_{2r}+C_{2r}+D_{2r}=(d+1)d(d^2-d+1)^{r-1}$

and $$B_{2r}= d(d^2-d+1)^{r-1}+\frac{1}{2}(-1)^{r-1}d(d-1)(d-2)^{r-1}+\frac{1}{2}(-1)^r(d+1)d^r.$$

Then,
$$
E_j = (d^2-d+1)^{r-1}d^{j+2}+\frac{1}{2}(-1)^{r+j}(d-2)^{r-1}(d-1)d +\frac{1}{2}(-1)^{r+j+1}d^{r}(d+1).
$$

From $E_j$ we now obtain the number of vertices of the iterated line graph of $CK(d,\ell)$ at iteration $t\geq1$. Since $t=r-1$ and $j=\ell-t-2$ we arrive at the claimed formula of Theorem~\ref{theo:num-v-line-dig}:
$$
(d^2-d+1)^t d^{\ell-t}+\frac{1}{2}(-1)^{\ell+1}(d-2)^t(d-1)d +\frac{1}{2}(-1)^{\ell} d^{t+1}(d+1).
$$

\begin{remark}
Theorem~\ref{theo:num-v-line-dig} also holds for $t=0$, if for the particular case $d=2$ the indeterminate form $(d-2)^t$ is defined as $1$.
\end{remark}

Note that, in general, the $t$-iterated line digraph of a cyclic Kautz digraph is neither a Kautz digraph, nor a cyclic Kautz digraph.
But if the length is $\ell=2$, then it is clear that $CK(d,2)$ (and all its iterated line digraphs) are Kautz digraphs.


Theorem~\ref{theo:num-v-line-dig} gives the number of vertices at the $t$-iteration of the line digraph of $CK(d,\ell)$, with $1\leq t\leq \ell-2$. Now we compute the number of vertices of $L^t(CK(d,\ell))$ without restriction on the value of $t$, for the particular case $\ell=4$. Let $N_t=|L^{t}(CK(d,4))|$.

\begin{proposition}
\label{prop:v-lin-dig2}
The number of vertices $N_t$ of the iterated line digraph of $CK(d,4)$ at iteration $t\geq0$ is
  $$
  N_t=\alpha\left(\frac{d-1+\sqrt{d^2-2d+5}}{2}\right)^t+\beta\left(\frac{d-1-\sqrt{d^2-2d+5}}{2}\right)^t,
  $$
  where $\alpha=\frac{1}{2}d(d+1)\left(d^2-d+1+\frac{d^3-2d^2+4d-1}{\sqrt{d^2-2d+5}}\right)$ and $\beta=\frac{1}{2}d(d+1)\left(d^2-d+1-\frac{d^3-2d^2+4d-1}{\sqrt{d^2-2d+5}}\right)$.
Moreover, if $d=2$, $N_t$ follows a Fibonacci sequence with initial values $18$ and $30$.
\end{proposition}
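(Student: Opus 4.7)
The plan is to derive a short linear recurrence for $N_t$ and then solve it. Using the sequence representation described just before Lemma~\ref{lemma:main}, a vertex of $L^t(CK(d,4))$ is a word $a_1 a_2 \ldots a_{t+4}$ over an alphabet of size $d+1$ satisfying $a_i \neq a_{i+1}$ for $1 \leq i \leq t+3$ and $a_i \neq a_{i+3}$ for $1 \leq i \leq t+1$. Iterating the line-digraph construction shows that this description is valid for all $t \geq 0$, not just in the range $t \leq \ell-2$ for which Theorem~\ref{theo:num-v-line-dig} was stated. The base cases are already available: $N_0 = d^4 + d$ from Proposition~\ref{prop:num-vertexs}, and $N_1 = d(d+1)(d^3-2d^2+3d-1)$ from Proposition~\ref{prop:num-arcs} (equivalently, from the case $t=1$, $\ell=4$ of Theorem~\ref{theo:num-v-line-dig}).

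To obtain the recurrence I classify the sequences counted by $N_t$ according to the relation between the last two characters at distance two. Let $P_t$ count those with $a_{t+4} = a_{t+2}$ and set $Q_t = N_t - P_t$. Appending a new symbol $a_{t+5}$ must satisfy $a_{t+5} \neq a_{t+4}$ and $a_{t+5} \neq a_{t+2}$, yielding $d$ valid extensions from each sequence counted by $P_t$ and $d-1$ from each counted by $Q_t$. The choice $a_{t+5} = a_{t+3}$ is always valid, because $a_{t+3}$ is consecutive to both $a_{t+4}$ and $a_{t+2}$ and therefore differs from each; moreover, it is the only way to land in the class $P_{t+1}$. Hence $P_{t+1} = P_t + Q_t = N_t$, so $P_t = N_{t-1}$ for $t \geq 1$, and
$$N_{t+1} = d\,P_t + (d-1)\,Q_t = (d-1)\,N_t + P_t = (d-1)\,N_t + N_{t-1}.$$

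The characteristic equation $x^2 - (d-1)x - 1 = 0$ has roots $\lambda_{\pm} = \frac{d-1 \pm \sqrt{d^2-2d+5}}{2}$, so $N_t = \alpha \lambda_+^t + \beta \lambda_-^t$. The constants $\alpha, \beta$ are determined by $\alpha + \beta = N_0$ and $\alpha \lambda_+ + \beta \lambda_- = N_1$; a brief expansion gives $2N_1 - (d-1)N_0 = d(d+1)(d^3-2d^2+4d-1)$, and dividing by $2(\lambda_+ - \lambda_-) = 2\sqrt{d^2-2d+5}$ produces the $\alpha$ and $\beta$ stated in the proposition. The final Fibonacci claim is immediate: for $d=2$ the recurrence becomes $N_{t+1} = N_t + N_{t-1}$ with seeds $N_0 = 18$ and $N_1 = 30$. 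The main point of care will be the combinatorial step that yields $P_{t+1} = N_t$ uniformly in whether we start in $P_t$ or $Q_t$, since it is exactly this coincidence that collapses a naive two-variable system into a single order-two linear recurrence.
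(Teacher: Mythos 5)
Your proof is correct and follows essentially the same route as the paper's: the paper also partitions the vertices of the previous iteration according to whether the two positions constraining the appended character coincide (its $N^+_{t-1}$, $N^-_{t-1}$ are your $P$, $Q$ up to an index shift), uses the same unique-extension argument to identify the ``equal'' class with the count two iterations back, and arrives at the recurrence $N_t=(d-1)N_{t-1}+N_{t-2}$ with the same initial values $N_0=d^4+d$ and $N_1=d(d+1)(d^3-2d^2+3d-1)$. No substantive differences.
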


\begin{figure}[t]
\vskip-1cm
    \begin{center}
  \includegraphics[width=10cm]{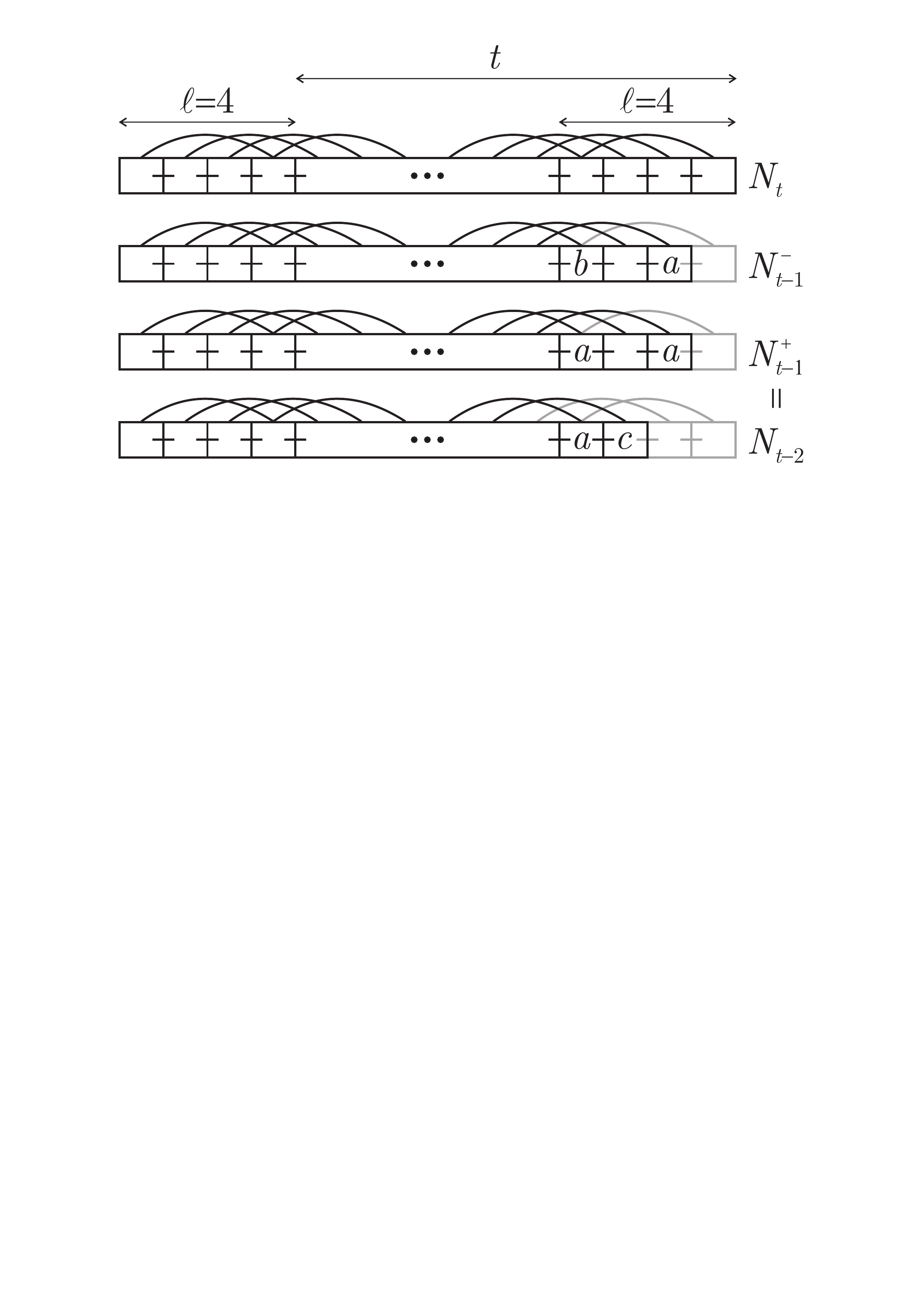}
  \end{center}
  \vskip-9.5cm
  \caption{Scheme to obtain the number of vertices in the $t$-iterated line digraph of $CK(d,4)$. We indicate in grey the characters that have to be added to obtain a sequence with $t+4$ characters. The lines between two symbols represent that they must be different.}
  \label{fig:num-v-dig-lin}
\end{figure}

\begin{proof}
  In the following, we construct $L^t(CK(d,\ell))$ by adding a new symbol $a_{4+t}$ to every vertex $a_1\ldots a_{3+t}$ of $L^{t-1}(CK(d,\ell))$.
   We make a partition of $L^{t-1}(CK(d,4))$ into two sets, one with sequences such that $a_{t+1}=a_{t+3}$, and the other one with sequences that satisfy $a_{t+1}\neq a_{t+3}$. In the first case, the cardinality of the number of vertices is called $N_{t-1}^{+}$, and in the second one it is called $N_{t-1}^{-}$, satisfying $N_{t-1}=N^{+}_{t-1}+N^{-}_{t-1}$. See a scheme for $\ell=4$ in Figure~\ref{fig:num-v-dig-lin}, where 
   the lines drawn between symbols
   indicate that the symbols at these positions must be different.

  With $d+1$ symbols, given the characters $a_{t+1}$ and $a_{t+3}$, for $a_{t+4}$ there are $d$ possible symbols corresponding to $N^{+}_{t-1}$ (that is, with $a_{t+1}=a_{t+3}$), and $d-1$ corresponding to $N^{-}_{t-1}$ (that is, with $a_{t+1}\neq a_{t+3}$). Then, $N_t=d\,N^{+}_{t-1}+(d-1)N^{-}_{t-1}$. Moreover, $N_{t-2}=N^{+}_{t-1}$, because from a vertex of $L^{t-2}(CK(d,4))$ there is only one possible symbol for the character $a_{t+3}$ to obtain a vertex of $L^{t-1}(CK(d,4))$, such that $a_{t+1}=a_{t+3}$. See Figure~\ref{fig:num-v-dig-lin}. Thus, $N^{-}_{t-1}=N_{t-1}-N_{t-2}$, and $N_t$ satisfies the recurrence equation $N_t=(d-1)N_{t-1}+N_{t-2}$, with initial conditions $N_0=d^4+d$ (see Proposition~\ref{prop:num-vertexs}) and $N_1=d(d+1)(d^3-2d^2+3d-1)$ (see Proposition~\ref{prop:num-arcs}).
  Solving this recurrence equation, we have
      $$
      N_t=\alpha\left(\frac{d-1+\sqrt{d^2-2d+5}}{2}\right)^t+\beta\left(\frac{d-1-\sqrt{d^2-2d+5}}{2}\right)^t.
      $$
     With the initial conditions $N_0$ and $N_1$, we get
      \begin{eqnarray*}
        \alpha &=& \frac{1}{2}d(d+1)\left(d^2-d+1+\frac{d^3-2d^2+4d-1}{\sqrt{d^2-2d+5}}\right), \\
        \beta &=& \frac{1}{2}d(d+1)\left(d^2-d+1-\frac{d^3-2d^2+4d-1}{\sqrt{d^2-2d+5}}\right).
      \end{eqnarray*}
      As a particular case, if $d=2$, then the recurrence equation is $N_t=N_{t-1}+N_{t-2}$, with initial values $N_0=18$ and $N_1=30$. Then, $N_t=\left(9+\frac{21}{\sqrt{5}}\right)\left(\frac{1+\sqrt{5}}{2}\right)^t+\left(9-\frac{21}{\sqrt{5}}\right)\left(\frac{1-\sqrt{5}}{2}\right)^t$,
      and we obtain the values $N_t=18,30,48,78,\ldots$ for $t=0,1,2,3,\ldots$, which is a Fibonacci sequence with initial values $18$ and $30$.
\end{proof}


We leave as open problems the following two questions: Find the number of vertices of the $t$-iterated cyclic Kautz digraph $CK(d,\ell)$
for the remaining values of $d$, $t$ and $\ell$; and compute
the number of vertices of the $t$-iterated modified cyclic Kautz digraph $MCK(d,\ell)$.

\newpage
\section*{Appendix}

\noindent{\bf{Proof of Lemma~\ref{lem:diamd1}}
\begin{proof}
Since each vertex of $CK(d,1)$ consists of exactly one symbol, there
is an arc between every pair of vertices of $CK(d,1)$.
For even $\ell$, $CK(1,\ell \geq 2)$ contains exactly two
vertices, $u$ and $v$. For both vertices the two symbols of the
alphabet alternate. Clearly, both ${u v}$ and ${v u}$ are arcs of
$CK(1,\ell \geq 2)$.
For odd~$\ell$, there is no sequence that would correspond to a vertex
of $CK(1,\ell \geq 2)$.
\end{proof}


\noindent{\bf{Proof of Lemma~\ref{lem:diamd2}}
\begin{proof}
Let $u = {u_1 u_2}$ and $v = {v_1 v_2}$ be two vertices of $CK(d \geq
2,2)$. If $u_2$ and $v_1$ are the same symbol, then ${u v}$ is an
arc of $CK(d \geq 2,2)$.
Otherwise, $u_2$ and $v_1$ are different symbols, ${u_2 v_1}$ is
a vertex of $CK(d \geq 2,2)$, and there is a path from $u$ to $v$ of
length $2$.
On the other hand, there is no path of length $1$ from vertex ${1 0}$
to vertex ${2 0}$ in $CK(d \geq 2,2)$.
Thus, the diameter of $CK(d \geq 2,2)$ is $2$.
\end{proof}


\noindent{\bf{Proof of Lemma~\ref{lem:pathIfOperations}}
\begin{proof}
To prove the reverse implication, we show that by performing any of
the
two given operations on the disc of a vertex $u$, we obtain the disc
of a vertex that is reachable from $u$ in $CK(d,\ell)$.
First note that rotating the disc of a vertex $u
= {u_1 u_2 {\ldots} u_\ell}$ by one position yields the disc of
a vertex
$u' = {u_2 {\ldots} u_\ell u_1}$, and $(u,u')$ is clearly an arc of
$CK(d,\ell)$.
Thus, by rotating the disc of $u$ by $k$ positions, we obtain a vertex
$u^{(k)} = {u_{k+1} {\ldots} u_\ell u_1 {\ldots} u_k}$ and there is
a path from $u$ to $u^{(k)}$ in $CK(d,\ell)$.

Now let $v$ be a vertex of $CK(d,\ell)$ such that the disc of $v$ can
be obtained from the disc of $u = {u_1 {\ldots} u_\ell}$ by swapping
the $k$-th symbol in a valid way. That is,
$v = {u_1 {\ldots} u_{k-1} x u_{k+1} {\ldots} u_\ell}$, with $x$
different
from both $u_{k-1}$ and $u_{k+1}$.
Then, the disc of $v$ can be obtained from the disc of $u$ as follows:
1) Rotate the disc of $u$ by $k-1$ positions to obtain the disc of
some vertex $r$; 2) Swap the symbol ($u_k$ for $x$) in the marked
position in the disc of vertex $r$ and rotate the resulting disc by
one
position to obtain the disc of a vertex $s$ with marked symbol
$u_{k+1}$;
3) Rotate the disc of $s$ by $\ell-k$ positions to obtain the disc of
a vertex $t = v$.
Note that, using the argument above, there is a path from $u$ to $r$
in
$CK(d,\ell)$, and there is also a path from $s$ to $v$ in
$CK(d,\ell)$.
Moreover, $(r,s)$ is an arc of $CK(d,\ell)$.
Thus, there is a path from $u$ to $v$ in $CK(d,\ell)$.

To show the forward implication, first recall that for every two
vertices $u$, $u'$ such that $(u,u')$ is an arc of $CK(d,\ell)$, the
disc of $u'$ can be obtained from the disc of $u$ by swapping the
symbol at the marked position and rotating the disc by one step.
Therefore, whenever $u$, $v$ are two vertices of $CK(d,\ell)$ such
that
there is a path from $u$ to $v$ in $CK(d,\ell)$, then the disc of $v$
can be obtained from the disc of $u$
as follows.
Start with the disc of $u$, follow the arcs of an $uv$-path
$p_1=u,p_2,{\dots},p_k=v$ and, for each arc $(p_i,p_j)$,
obtain the disc of $p_j$ from the disc of $p_i$ by swapping the symbol
at the marked position of $p_i$ and rotating the resulting disc by one
step.
\end{proof}


\noindent{\bf{Proof of Lemma~\ref{lem:OpsNotChangeImprint}}
\begin{proof}
To prove the reverse implication, first observe that performing any of
the two operations does not
change the number of $+/-$ signs in the imprint.
Rotation clearly has no influence on the number of $+$ and $-$ signs
in the imprint.
The swap of a symbol $u_i$ can only be performed if $u_{i-1}
= u_{i+1}$. This follows from the fact that the symbol at position
$u_i$ must be distinct from both its neighbors, and if $u_{i-1} \not=
u_{i+1}$, then there is only one valid symbol left for $u_i$ and
thus it cannot be swapped.
However, if $u_{i-1} = u_{i+1}$, then the two pairs $(u_{i-1}, u_i)$,
and $(u_i, u_{i+1})$ contribute to the imprint by exactly one $+$
sign and one $-$ sign, and this remains true also after the swap of
the symbol $u_i$.

To show the forward implication, let us now consider two vertices
$u$, $v$ of $CK(2,\ell)$ such that
the numbers of $+$ and $-$ signs in the two imprints agree.
We will show that this implies that the disc of $v$ can be obtained
from the disc of $u$ by a sequence of the permitted operations as
follows.
Since there are only $3$ different symbols in $u$ and $v$, there is
a symbol that appears in both of $u$ and $v$, say $u_s = v_t$.
Let us rotate the disc of $u$ by $s-1$ steps and the disc of $v$ by
$t-1$ steps yielding the discs of vertices $u'$ and $v'$ such that
both have the symbol $u_s = v_t$ at the marked position, that is,
$u_1' = v_1'$.

Now observe that by swapping the symbol at a position $i$ in the
disc of a vertex $w$, we can change the imprint of $w$ in positions
$i-1$ and $i$ from $-+$ to $+-$ (or vice versa).
For example, if $sgn(w_{i-1},w_{i}) = -$ and $sgn(w_{i},w_{i+1}) = +$,
then $w_{i-1} = w_{i+1}$, and thus, we can swap the symbol $w_{i}$ for
the remaining symbol (different from $w_{i-1}$ and $w_i$).
Based on this observation, notice that by a sequence of
operations of swapping a symbol of the disc of $w
= {w_1 {\ldots} w_\ell}$, we can obtain the disc of $w'
= {w_1' {\ldots} w_\ell'}$ such that $w_1' = w_1$ and the imprint
$im(w')
= (+ {\ldots} + - {\ldots} -)$ contains all the $+$ signs
contiguously at the beginning and all the $-$ signs contiguously at
the
end.
Thus, in particular, we can transform by a sequence of swaps the
discs of $u'$ and $v'$ into the discs of $u''$ and $v''$ such that
$u_1'' = u_1' = v_1' = v_1''$, and both imprints $im(u'')$ and
$im(v'')$ are of the form $(+ {\ldots} + - {\ldots} -)$.
Since the imprints of $u$ and $v$ have the same number of $+/-$ signs
and the performed operations do not change it, it follows that
$im(u'') = im(v'')$.

Finally, observe that the imprint of a vertex $w$ together with the
first symbol of $w$ determines the vertex $w$ in $CK(2,\ell)$
uniquely.
Since $im(u'') = im(v'')$ and $u_1'' = v_1''$, we obtain that $u''
= v''$. Therefore, the disc of $v$ can be obtained from the disc of
$u$ by a sequence of the listed operations.
\end{proof}


\noindent{\bf{Proof of Lemma~\ref{lem:diam51}}

\begin{proof}
We use the same strategy as in Lemma~\ref{lem:diam41}.
We show that for any pair of vertices $u$, $v$ of $CK(d\geq 4,\ell
\geq 4)$, there
is always a path of length at most $2\ell - 2$ or $2\ell - 3$.
In particular, we show that there is either a sequence
$y = {y_1  y_2  {\ldots}  y_{\ell-2}}$ of $\ell-2$ symbols or
a sequence
$z = {z_1  z_2  {\ldots}  z_{\ell-3}}$ of $\ell-3$ symbols, such that
the
concatenation of $u$, one of $y$ or $z$, and $v$ forms
a sequence such that any contiguous subsequence of length $\ell$
is a vertex of $CK(d\geq 4,\ell \geq 4)$.

Given the vertices $u$ and $v$, we distinguish two cases depending on
whether the symbols $u_{\ell}$ and $v_1$ differ or not.
Let us first assume that $u_{\ell} \not= v_1$.
We construct the sequence $y$ of $\ell-2$ symbols one by one,
respecting the following restrictions. The symbol $y_1$ must differ
from the symbols $u_2$, $u_{\ell}$, and $v_2$. In general, $y_i$ must
differ from the $u_{i+1}$, $y_{i-1}$, and $v_{i+1}$. Finally,
$y_{\ell-2}$ must also differ from $v_1$.
Since for each of the $y_i$ we have at most $4$ restrictions, and the
alphabet contains $5$ symbols, we can always find a symbol to assign
to $y_i$.
These restrictions ensure that the constructed sequence $y$ satisfies
that each subsequence of $\ell$ symbols of $u$,$y$ or of $y$,$v$ forms
a vertex of $CK(d\geq 4,\ell \geq 4)$.
Moreover, the assumption that $u_{\ell} \not= v_1$ ensures that also
the sequence ${u_{\ell} y_1 {\ldots} y_{\ell-2} v_1}$ forms a vertex
of
$CK(d\geq 4,\ell \geq 4)$.

Let us now assume that $u_{\ell} = v_1$.
This implies that no sequence $y$ of length $\ell-2$ satisfying the
above property can be found, since ${u_{\ell} y_1 {\ldots} y_{\ell-2}
v_1}$ is not a vertex of $CK(d\geq 4,\ell \geq 4)$.
However, this also implies that $u_{\ell} = v_1 \not= v_2$ and that
$v_1 = u_{\ell} \not= u_{\ell-1}$.
We construct the sequence $z$ of $\ell-3$ symbols one by one,
respecting analogous restrictions as in the previous case.
Again, due to the fact that the alphabet contains $5$ symbols, we can
find $z$, such that the subsequences of $\ell$ symbols of $u$, $z$ or
of $z$, $v$ form a vertex of $CK(d\geq 4,\ell \geq 4)$.
Moreover, since $u_{\ell} \not= v_1$, we observe that $u_{\ell} \not=
v_2$ and $v_1 \not= u_{\ell-1}$. Therefore also both the sequences
${u_{\ell-1} u_{\ell} z_1 {\ldots} z_{\ell-3} v_1}$ and
${u_{\ell} z_1 {\ldots} z_{\ell-3} v_1 v_2}$ form vertices of
$CK(d\geq
4,\ell \geq 4)$.

Therefore, we can either find a valid sequence $y$ of length $\ell-2$,
or a valid sequence $z$ of length $\ell-3$. This gives us an upper
bound of $2\ell-2$ on the diameter of $CK(d\geq 4,\ell \geq 4)$.
\end{proof}


\noindent{\textbf{Proof of Lemma~\ref{lem:diam_lb}}

\begin{proof}
$(a)$ ($d\geq 3, \ell = 3$, lower bound) \label{lem:diam33}
The diameter of $CK(d \geq 3,3)$ is at least $5 = 2\ell - 1$.

Let $u = {0 1 2}$ and $v = {2 1 0}$ be two vertices of $CK(d \geq
3,3)$.
We argue that $u$ and $v$ are at the distance at least $5$ as follows.
Since the only symbol $2$ in $u$ is at the position $u_3$,
but $v_1 = 2$, at least $2$ steps are needed to reach $v$ from $u$.
Since $u_2 = v_2$, then ${u_2 u_3 = v_1 v_2}$ is not a vertex
and $v$ cannot be reached from $u$ in $2$ steps.
Also, since $u_3 = v_1$ and ${u_3 v_1 v_2}$ is not a vertex,
thus also $3$ steps are not enough to reach $v$ from $u$.
Finally, since $u_3 = v_1$ and ${u_3 x v_1}$ is not a vertex for
any symbol $x$, thus $v$ cannot be reached from $u$ in $4$ steps.


$(b)$ ($d=3, \ell \geq 3$, lower bound) \label{lem:diam42}
The diameter of $CK(3,\ell \geq 3)$ is at least $2\ell - 1$.

We distinguish two cases depending on the parity of $\ell$, and in both
we describe a pair of vertices $u$ and $v$ of $CK(3,\ell \geq 3)$ that
are at the distance at least $2\ell - 1$.

First, let $\ell$ be odd.
Consider $u = {0 1 0 1 {\ldots} 0 1 2}$, and $v = {2 1 0 {\ldots} 1 0 1 0}$
of $CK(3,\ell \geq 3)$.
Since the only symbol $2$ in $u$ is at the position $u_\ell$, but $v$
contains $2$ at the position $v_1$, at least $\ell - 1$ steps are
needed to reach $v$ from $u$.
However, $\ell - 1$ steps are also not enough, since
$u_{\ell - 1} = 1 = v_{\ell - 1}$, and thus the sequence
${u_{\ell - 1} u_\ell = v_1 {\ldots} v_{\ell - 1}}$ is not a vertex
of $CK(3,\ell \geq 3)$.
For the sake of contradiction, assume that there is a path of length
$\ell + z$, for some $0 \leq z < \ell-1$.
Then, there must exist a sequence of $z$ symbols $x = (x_1, x_2, {\dots},
x_z)$, such that any contiguous subsequence of length $\ell$
of $u,x,v$ forms a vertex of $CK(3,\ell \geq 3)$.
For $z = \ell - 2$ no such sequence $x$ exists, since $u_{\ell}
= 2 = v_1$, which implies that ${u_{\ell} x_1 {\ldots} x_z v_1}$ is not
a vertex of $CK(3,\ell \geq 3)$.
If $z$ is odd, there is no such sequence $x$, since
$u_{\ell-1} = 1 = v_{\ell-2-z}$, which implies that
${u_{\ell-1} u_\ell x_1 {\ldots} x_z v_1 {\ldots} v_{\ell-2-z}}$ is not
a vertex of $CK(3,\ell \geq 3)$.
If $z$ is even, assume for the sake of contradiction that there is
such a sequence $x$. First observe that $x$ cannot contain any symbol
$0$ or $1$ as follows.
Since any subsequence of $u,x,v$ of $\ell$ consecutive symbols must
form a vertex of $CK(3,\ell \geq 3)$, the symbol at position $x_i$
must differ from the symbols at the positions $u_{i+1}$ and
$v_{\ell-1-z+i}$.
Thus, at position $x_i$, cannot be $0$: If $i$ is odd, also
$\ell-1-z+i$ is odd and thus $v_{\ell-1-z+i} = 0$ and otherwise if $i$
is even, then $i+1$ is odd and thus $u_{i+1} = 0$
(note that $0 \leq z < \ell-1$ implies $\ell-1-z+i > 1$, and $i \leq
z$, thus also $i+1 \leq \ell-1$).
Similarly, the symbol at the position $x_i$ cannot be $1$: If $i$ is
odd, $i+1$ is even and $u_{i+1} = 1$; otherwise, if $i$ is even,
$\ell-1-z+i$ is even and $v_{\ell-1-z+i} = 1$.
Therefore, since the size of the alphabet is $4$, the sequence $x$
must consists only of $2$'s and $3$'s.
Since $u_{\ell} = 2 = v_1$, both $x_1$ and $x_z$ must be $3$.
Then, $x_2 = x_{z-1} = 2$, and $x_3 = x_{z-2} = 3$, {\dots}
Since $z$ is even, the sequence $x$ has even length, and thus
$x_{z/2} = x_{z/2 + 1}$ (the same symbols meet in the middle of $x$),
which is not possible and we get a contradiction.
\smallskip

Now, let $\ell$ be even and let us proceed similarly.
Consider $u = {1 0 2 0 2 0 {\ldots} 2 0 1 2}$, and $v
= {2 1 3 0 2 0 {\ldots} 2 0 1 0}$ of $CK(3,\ell > 3)$ (for example, for
$\ell=4$, $u = {2 0 1 2}$, and $v = {2 1 3 0}$).
Since the vertex $v$ starts with the symbols $2,1$, but the vertex $u$
does not contain this pattern, at least $\ell-1$ steps are needed to
reach $v$ from $u$.
However, $\ell - 1$ steps are also not enough, since
$u_{\ell-1} = 1 = v_{\ell-1}$, and thus the sequence
${u_{\ell-1} u_\ell = v_1 {\ldots} v_{\ell-1}}$ is not a vertex
of $CK(3,\ell > 3)$.
Also $\ell$ steps do not suffice, since $u_{\ell} = 2 = v_1$, and
thus ${u_\ell v_1 {\ldots} v_{\ell-1}}$ is not a vertex
of $CK(3,\ell > 3)$.
Again, assume for the sake of contradiction that there is a path of
length $\ell + z$ connecting $u$ and $v$, for some $1 \leq
z < \ell-1$.
Then, there must exist a sequence of $z$ symbols $x = (x_1, x_2, {\dots},
x_z)$ such that any contiguous subsequence of length $\ell$
of $u,x,v$ forms a vertex of $CK(3,\ell > 3)$.
We distinguish two cases depending on the parity of~$z$.
If $z$ is odd, we distinguish 3 cases depending on $z$ (the length of
$x$), and for each we argue why there cannot be such a sequence $x$.
\begin{itemize}
\item $z < \ell - 5$: Independently of $x$, the subsequence
${u_{\ell-2} u_{\ell-1} u_{\ell} x_1 {\ldots} x_z v_1 {\ldots} v_{\ell-3-z}}$
is not a vertex of $CK(3,\ell > 3)$, since $u_{\ell-2}
= 0 = v_{\ell-3-z}$.
\item $z = \ell - 5$: Independently of $x$, the subsequence
${u_{\ell-3} {\ldots} u_{\ell} x_1 {\ldots} x_z v_1}$
is not a vertex of $CK(3,\ell > 3)$, since $u_{\ell-3} = 2 = v_{1}$.
\item $z = \ell - 3$:
We observe that there are the following restrictions on the symbols in
$x$: Since for each $i$, both ${u_{i+1} {\ldots} u_\ell x_1 {\ldots} x_i}$
and ${x_i {\ldots} x_z v_1 {\ldots} v_{i+2}}$ must be vertices of
$CK(3,\ell > 3)$, the symbol $x_i$ must differ from both $u_{i+1}$ and
$v_{i+2}$. Moreover, $x_1$ must differ from $u_\ell$, and $x_z$
must differ from $v_1$.
Considering $u$ and $v$, this implies that $x$ can only consist of
symbols $1$ and $3$, with $x_1 = 1$ and $x_z = 3$. However, since the
two symbols in $x$ must alternate, and $z$ is odd, all these
conditions cannot be satisfied and we get a contradiction.
\end{itemize}
If $z$ is even, we distinguish 3 cases depending on $z$ (the length of
$x$) and for each we identify a subsequence of $u,x,v$ of length
$\ell$ that does not form a vertex of $CK(3,\ell > 3)$:
\begin{itemize}
\item $z < \ell - 4$:
${u_\ell x_1 {\ldots} x_z v_1 {\ldots} v_{\ell-1-z}}$ is not a vertex,
since $u_\ell = 2 = v_{\ell-1-z}$.
\item $z = \ell - 4$:
${u_{\ell-1} u_\ell x_1 {\ldots} x_z v_1 v_2}$ is not a vertex,
since $u_{\ell-1} = 1 = v_2$.
\item $z = \ell - 2$:
${u_\ell x_1 {\ldots} x_z v_1}$ is not a vertex, since $u_\ell
= 2 = v_1$.
\end{itemize}

Therefore, in both cases (odd and even $\ell$), we identified a pair of
vertices of $CK(3,\ell \geq 3)$ that are at the distance at least
$2\ell - 1$, thus bounding the diameter of $CK(3,\ell \geq 3)$ from
below.

\vskip.25cm 

$(c)$ ($d \geq 4, \ell \geq 3$, lower bound) \label{lem:diam52}
The diameter of $CK(d\geq 4,\ell \geq 3)$ is at least $2\ell - 2$.

Consider $u = {{\ldots}  0 1 0 1 0 1 2}$ ($u$ begins with ${0 1}$ if
$\ell$ is odd and with ${1 0}$ if $\ell$ is even),
and $v = {1 3 2 0 2 0 2 {\ldots}}$
of $CK(d\geq 4,\ell \geq 3)$.
We show that the shortest path from $u$ to $v$ in $CK(d\geq 4,\ell
\geq 3)$
has length at least $2\ell - 2$ as follows.
Since the vertex $v$ starts with the symbols $1,3$, but the vertex $u$
does not contain this pattern, at least $\ell-1$ steps are needed to
reach $v$ from $u$.
Also, since $u_\ell \not= v_1$, $\ell-1$ steps are not enough.
For the sake of contradiction assume that there is a path of length
$\ell + z$, for some $0 \leq z < \ell-2$.
Then, there must exist a sequence of $z$ symbols $x = (x_1, x_2, {\dots},
x_z)$ such that any contiguous subsequence of length $\ell$
of $u,x,v$ forms a vertex of $CK(d\geq 4,\ell \geq 3)$.
We distinguish 4 cases depending on the parity of $z$ and $\ell$, and
for each we identify a subsequence of $u,x,v$ of length $\ell$ that
does not form a vertex of $CK(d\geq 4,\ell \geq 3)$:
%
\begin{itemize}
\item $z$ odd, $\ell$ odd:
${u_\ell x_1 {\ldots} x_z v_1 {\ldots} v_{\ell-1-z}}$ is not a vertex,
since $u_\ell = 2 = v_{\ell-1-z}$.
\item $z$ odd, $\ell$ even:
${u_{2+z} {\ldots} u_\ell x_1 {\ldots} x_z v_1}$ is not a vertex,
since $u_{2+z} = 1 = v_1$.
\item $z$ even, $\ell$ odd:
${u_{2+z} {\ldots} u_\ell x_1 {\ldots} x_z v_1}$ is not a vertex,
since $u_{2+z} = 1 = v_1$.
\item $z$ even, $\ell$ even:
${u_\ell x_1 {\ldots} x_z v_1 {\ldots} v_{\ell-1-z}}$ is not a vertex,
since $u_\ell = 2 = v_{\ell-1-z}$.
\end{itemize}
Therefore, the shortest path from $u$ to $v$ has length at least
$2\ell-2$.
\end{proof}


\noindent{\bf{Proof of Lemma~\ref{lemma:main}
\begin{proof}
Let $X_\ell=(B_\ell,C_\ell,D_\ell)^\top$. We want to solve the matrix recurrence equation $X_\ell=A\,X_{\ell-2}$, where
$$
A=\left(
  \begin{array}{ccc}
    0 & d & d-1 \\
    d & 0 & d-1 \\
    (d-1)^2 & (d-1)^2 & d^2-3d+3 \\
  \end{array}
\right).
$$
By induction, $X_\ell=A^{\ell/2-3}\,X_6$, with

$$
X_6=\Big((d+1)d(d-1)^3, (d+1)d(d^3-2d^2+3d-1),(d+1)d(d-1)^2(d^2-2d+3)\Big)^\top
$$
and
\begin{eqnarray*}
&& \hskip-.5cm A^{\ell/2-3} = P\,D^{\ell/2-3}\,P^{-1}=\\
&& \hskip-.5cm \left(
  \begin{array}{ccc}
    1 & \frac{1}{2} & \frac{1}{1-d} \\
    -1 & \frac{1}{2} & \frac{1}{1-d} \\
    0 & -1 & -1 \\
  \end{array}
\right)\left(
  \begin{array}{ccc}
    (-d)^{\ell/2-3} & 0 & 0 \\
    0 & (2-d)^{\ell/2-3} & 0 \\
    0 & 0 & (d^2-d+1)^{\ell/2-3} \\
  \end{array}
\right)\left(
  \begin{array}{ccc}
    1 & \frac{1}{2} & \frac{1}{1-d} \\
    -1 & \frac{1}{2} & \frac{1}{1-d} \\
    0 & -1 & -1 \\
  \end{array}
\right)^{-1},
\end{eqnarray*}
where $D$ and $P$ are, respectively, the matrices of eigenvalues and eigenvectors of $A$. From $X_\ell=A^{\ell/2-3}\,X_6$, we obtain
the claimed solution.
\end{proof}


\noindent{\bf{Proof of Lemma~\ref{lem:recursion2}}}

\begin{proof}
{\it{Claim:}} $E_i + E_{i-1} = d^i (B_{2r}+C_{2r}+D_{2r})$ holds for every $i>0$.
We proceed by induction on $i.$ For the base case $i=1$, we have
$E_1+E_0
= d(B_{2r}+C_{2r}+D_{2r}).$

For the induction step, we can assume that $E_i + E_{i-1} = d^i (B_{2r}+C_{2r}+D_{2r})$ and
show that the statement also holds for $i+1$.

$E_{i+1}=(d-1)E_{i}+d\,E_{i-1}=d(E_i+E_{i-1})-E_i.$

Then, $$E_{i+1}+E_i = d(E_i+E_{i-1}) = d d^{i} (B_{2r}+C_{2r}+D_{2r}) = d^{i+1}(B_{2r}+C_{2r}+D_{2r}).$$
{\it{End of claim.}}

The recursion can now be solved by substituting
$$E_j=d^{j}(B_{2r}+C_{2r}+D_{2r})-E_{j-1}= d^{j}(B_{2r}+C_{2r}+D_{2r})-d^{j-1}(B_{2r}+C_{2r}+D_{2r})+E_{j-2}$$
$$=\cdots=\left( \sum_{i=0}^{j} (B_{2r}+C_{2r}+D_{2r})d^i (-1)^{j+i} \right) -B_{2r}(-1)^j.$$
Note that the term $B_{2r}(-1)^{j}$ is subtracted because it is not present in the expression for $E_0$. This sum, a geometric series, is further simplified as
$$E_j=\left( (-1)^j(B_{2r}+C_{2r}+D_{2r})\sum_{i=0}^{j} (-d)^i  \right) -B_{2r}(-1)^j$$
$$= \left((-1)^j(B_{2r}+C_{2r}+D_{2r})\frac{1-(-d)^{j+1}}{d+1}\right) - B_{2r}(-1)^j.$$
\end{proof}

\end{document}